\let\mathcal\mathscr
\theoremstyle{plain}
\newtheorem{thm}{Théorème}[section]
\newtheorem{prop}[thm]{Proposition}
\newtheorem{conj}[thm]{Conjecture}
\theoremstyle{remark}
\theoremstyle{definition}
\newtheorem{defi}[thm]{Définition}
\newenvironment{enumeratea}
{\bgroup\begin{enumerate}}
{\end{enumerate}\egroup}
\let\sep\mid
\def\siecle#1{\textsc{\romannumeral #1}\textsuperscript{e}~siècle}
\def\mto{\mathchoice{\longmapsto}{\mapsto}{\mapsto}{\mapsto}}
\def\sfrac#1#2{{#1}/{#2}}
\newcommand{\QQ}{\mathbb{Q}}
\providecommand*{\shuffle}{%
  \mathbin{\mathpalette\shuffle@{}}%
}
\newcommand*{\shuffle@}[2]{%
  \sbox0{$#1\vcenter{}$}%
  \kern .15\ht0 
  \rlap{\vrule height .25\ht0 depth 0pt width 2.5\ht0}%
  \raise.1\ht0\hbox to 2.5\ht0{%
    \vrule height 1.75\ht0 depth -.1\ht0 width .17\ht0 %
    \hfill
    \vrule height 1.75\ht0 depth -.1\ht0 width .17\ht0 %
    \hfill
    \vrule height 1.75\ht0 depth -.1\ht0 width .17\ht0 %
  }%
  \kern .15\ht0 
}
\begin{document}
\title{Valeurs zêta multiples}
\author{Clément Dupont}
\address{Institut Montpelliérain Alexander Grothendieck (IMAG),
Université de Montpellier\\
CNRS\\
Place Eugène Bataillon\\
34090 Montpellier\\ France}
\email{clement.dupont@umontpellier.fr}
\urladdr{https://imag.umontpellier.fr/~dupont/}

\thanks{Cet article de survol accompagne deux exposés donnés à l'occasion des Journées mathématiques X-UPS 2019. L'auteur a bénéficié du soutien de la bourse ANR-18-CE40-0017 \og PERGAMO\fg de l'Agence Nationale de la Recherche}

\begin{abstract}
Les valeurs zêta multiples forment une famille de cons\-tantes mathématiques fondamentales qui contient notamment les valeurs aux entiers de la fonction zêta de Riemann. Si Euler leur a consacré des travaux, c’est seulement à la fin du \textsc{xx}\ieme siècle que mathématiciens et physiciens ont réalisé l’importance de ces nombres qui apparaissent naturellement dans des situations variées. Des travaux récents (Goncharov, Deligne, Brown, \ldots) ont mis en évidence une structure cachée qui est révélée par la géométrie: une théorie de Galois des valeurs zêta multiples. L’étude de cette structure a permis de prouver des résultats spectaculaires sur les relations algé\-briques satisfaites par ces nombres, que nous présenterons. Nous discuterons enfin de l’apparition des valeurs zêta multiples en physique des particules à travers le calcul d’intégrales de Feynman.
\end{abstract}
\maketitle
\thispagestyle{empty}

\tableofcontents

\section{Introduction}

Les \emph{valeurs zêta multiples} sont les sommes des séries multiples
\begin{equation}\label{eq:defiMZV}
\zeta(n_1,\ldots,n_r)=\sum_{1\leq k_1<\cdots <k_r}\frac{1}{k_1^{n_1}\cdots k_r^{n_r}}
\end{equation}
où les $n_i$ sont des nombres entiers supérieurs ou égaux à $1$ et $n_r\geq 2$, cette dernière condition assurant la convergence de la série. Pour $r=1$ on retrouve les \emph{valeurs zêta simples}\vspace*{-3pt}\enlargethispage{\baselineskip}%
\begin{equation}\label{eq:defiSZV}
\zeta(n)=\sum_{k=1}^{\infty}\frac{1}{k^n}
\end{equation}
qui sont les valeurs aux entiers de la fonction zêta de Riemann. Ces nombres ont été étudiés, notamment par Euler, bien avant que Riemann étudie $\zeta(s)$ comme fonction d'une variable $s$ \emph{complexe}, et ses liens avec la répartition des nombres premiers (voir à ce sujet les actes des Journées X-UPS 2002 \cite{xupszeta}). Les valeurs zêta simples, et plus généralement les valeurs zêta multiples, ont une richesse arithmétique d'un autre ordre et jouent le rôle de \emph{constantes fondamentales}. Elles se manifestent en effet dans de nombreux domaines des mathématiques et de la physique, notamment par le biais des intégrales de Feynman qui gouvernent les interactions entre particules élémentaires.

Il y a donc un intérêt pratique à \emph{organiser} la famille des valeurs zêta multiples et à comprendre les liens qu'elles entretiennent entre elles et avec d'autres constantes. C'est cette question, réactivée il y a à peine 30 ans, que nous abordons dans un premier temps par l'expé\-rimentation en partant de l'étude entreprise par Euler des valeurs zêta simples. De manière surprenante, il en ressort des réponses incroya\-blement structurées, qui trouvent leurs explications dans le \hbox{domaine} de la géométrie algébrique. Les valeurs zêta multiples s'avèrent alors être les incarnations tangibles de phénomènes profonds liés à la \emph{théorie des motifs}, une théorie de l'obstruction universelle pour les variétés algébriques pensée par Grothendieck dans les années 1960. Nous nous efforcerons dans un second temps d'introduire ces idées \hbox{modernes} en mettant l'accent sur leurs implications arithmétiques.

Ces résultats sont un premier témoignage de la puissance de l'étude \emph{géométrique} de certains nombres transcendants: les \emph{périodes}, qui sont les valeurs des intégrales définies par des formules algébriques. Une idée centrale qu'on développera est que la théorie de Galois \hbox{classique}, qui organise la famille des nombres algébriques, devrait se géné\-raliser à toutes les périodes et révéler des symétries cachées dans les constantes mathématiques. On décrira des résultats récents qui montrent que ces symétries se révèlent aussi dans les constantes physiques qui apparaissent dans le calcul des intégrales de Feynman.

\subsubsection*{Remerciements} L'auteur souhaite remercier Michel Alessandri pour sa relecture attentive d'une première version de ce texte.

\section{Valeurs zêta simples}

\subsection{Le problème de Bâle}

Dans la deuxième moitié du \siecle{17} se développent des techniques permettant de calculer des sommes de séries infinies. La quête d'une formule de forme fermée pour $\zeta(2)$, c'est-à-dire la somme des inverses des carrés des nombres entiers, est connue sous le nom de \emph{problème de Bâle}, du nom de la ville des frères Bernoulli qui ont popu\-larisé le problème (pourtant posé dès 1644 par Mengoli, originaire de Bologne). C'est Euler, un autre bâlois, qui en donnera la solution en 1735 \cite{euler1740summis}:
$$\zeta(2)=\frac{\pi^2}{6}\cdot$$
Plus généralement, Euler montre que les valeurs zêta paires sont des multiples rationnels de puissances de $\pi$:
\begin{equation}\label{eq:solution bale}
\zeta(2n) = \frac{|B_{2n}|}{2(2n)!}\,(2\pi)^{2n},
\end{equation}
où les $B_{2n}$ sont les nombres de Bernoulli, définis par leur série génératrice exponentielle
$$\sum_{n\geq 0}B_n\,\frac{t^n}{n!} = \frac{t}{e^t-1}\cdot$$
Le première preuve donnée par Euler consiste à identifier les coefficients du développement en produit infini de la fonction sinus\footnote{Cette formule peut être prouvée par des méthodes d'analyse complexe \cite[Chap.\,15, Exer.\,4]{rudin}, dont Euler ne disposait pas.}:
$$\frac{\sin(\pi x)}{\pi x} = \prod_{k=1}^{\infty}\Bigl(1-\frac{x^2}{k^2}\Bigr).$$

Quelques années plus tôt, en 1731, Euler avait déjà accompli un exploit mathématique en calculant une valeur approchée de $\zeta(2)$ avec $20$ chiffres significatifs, qui lui a permis de vérifier sa solution au problème de Bâle\footnote{Stirling avait obtenu la même approximation par d'autres méthodes en 1730 \cite[p.\,55]{stirling}.}. La série définissant $\zeta(2)$ convergeant très lentement (son reste est équivalent à~$1/n$), elle donne lieu à des approximations médiocres de sa somme et Euler a dû inventer pour l'occasion ce qu'on appelle aujourd'hui la \emph{formule d'Euler-Maclaurin} pour estimer la suite du développement asymptotique \cite{euler1738summatione}. Il obtient l'approximation suivante, dont l'erreur est majorée par $(10\,n^{11})^{-1}$:
$$\sum_{k=1}^n\frac{1}{k^2} \sim \zeta(2) -\frac{1}{n} +\frac{1}{2n^2} -\frac{1}{6n^3} + \frac{1}{30 n^5} - \frac{1}{42 n^7} + \frac{1}{30 n^9} \cdot$$
En calculant seulement $n=100$ termes on obtient donc l'approximation souhaitée:
$$\zeta(2) \simeq 1.6449340668482264364724076,$$
qui est bien valable à $10^{-23}$ près:
$$\frac{\pi^2}{6}\simeq 1.6449340668482264364724152.$$

Notons pour conclure que la formule \eqref{eq:solution bale} implique que les valeurs zêta paires sont des nombres transcendants, c'est-à-dire ne sont racines d'aucun polynôme non nul à coefficients rationnels. Cela résulte en effet de la transcendance de $\pi$, qui a été prouvée par Lindemann en 1882 \cite{lindemann}.

\subsection{Valeurs zêta impaires}

La recherche de formules de forme fermée pour les valeurs zêta impaires $\zeta(2n+1)$ fut moins fructueuse. Il est aujourd'hui communément admis que, contrairement aux valeurs zêta paires, elles ne s'expriment \emph{probablement} pas de manière algébrique en fonction de $\pi$ et ne satis\-font même \emph{probablement} pas de relations algébriques entre elles: elles forment une nouvelle famille de constantes mathématiques. Plus explicitement, on a la conjecture suivante:

\begin{conj}\label{conj:zeta impairs}
Les nombres $\pi$ et $\zeta(2n+1)$, pour $n\geq 1$, sont algébrique\-ment indépendants sur $\QQ$.
\end{conj}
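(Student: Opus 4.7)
Cet énoncé est l'une des conjectures les plus résistantes de la théorie des nombres contemporaine, et il faut d'emblée mesurer l'écart entre ce qui est établi et ce qu'il s'agit de démontrer. La transcendance de $\pi$ est connue depuis Lindemann ; en revanche on ignore complètement si, par exemple, $\zeta(3)/\pi^3$ est rationnel. Les seuls résultats connus sur les valeurs zêta impaires sont l'irrationalité de $\zeta(3)$ (Apéry), l'existence d'une infinité de $\zeta(2n+1)$ irrationnels (Ball-Rivoal) et l'irrationalité d'au moins l'un des nombres $\zeta(5)$, $\zeta(7)$, $\zeta(9)$, $\zeta(11)$ (Zudilin) ; aucune relation d'indépendance algébrique non triviale entre deux de ces nombres n'est actuellement démontrée.

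Mon plan serait d'emprunter la voie géométrique évoquée dans l'introduction du texte. L'idée consiste à remplacer les nombres $\pi$ et $\zeta(2n+1)$ par leurs avatars dans un anneau gradué de \emph{périodes motiviques}, sur lequel agit un groupe de Galois motivique ; plus précisément on travaillerait dans la catégorie des motifs de Tate mixtes sur $\mathbb{Z}$, dont les périodes sont des $\QQ$-combinaisons linéaires de valeurs zêta multiples et de puissances de $2i\pi$. Les travaux de Deligne, Goncharov et Brown permettent de décrire l'algèbre de Lie du groupe de Galois motivique de cette catégorie comme librement engendrée par un élément en chaque degré impair $\geq 3$ ; on en déduit sans peine l'indépendance algébrique des versions motiviques $\pi^{\mathrm{mot}}$ et $\zeta^{\mathrm{mot}}(2n+1)$. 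Il resterait alors à transférer cet énoncé motivique, de nature purement algébrique, vers l'énoncé arithmétique voulu, grâce à la \emph{conjecture des périodes de Grothendieck} qui postule que le morphisme de comparaison entre périodes motiviques et périodes numériques est injectif.

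L'obstacle principal est clairement cette dernière étape : la conjecture des périodes de Grothendieck est à ce jour entièrement ouverte, et on ne sait en démontrer aucune instance non triviale faisant intervenir les valeurs zêta. Une stratégie alternative, plus élémentaire, consisterait à généraliser la méthode d'Apéry en construisant pour tout $n$ des approximations rationnelles simultanées suffisamment rapides des nombres $\zeta(3), \zeta(5), \ldots, \zeta(2n+1)$ ; mais cette approche, même dans ses versions les plus raffinées, semble structurellement incapable de produire autre chose que des résultats d'indépendance linéaire sur $\QQ$, et non d'indépendance algébrique. La conjecture~\ref{conj:zeta impairs} paraît donc devoir rester, pour longtemps encore, totalement hors de portée.
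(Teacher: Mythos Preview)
Your assessment is entirely accurate: the statement is labelled as a \emph{conjecture} in the paper, and the paper does not prove it. There is therefore no ``paper's own proof'' to compare against. The paper presents Conjecture~\ref{conj:zeta impairs} as a wide-open problem, lists the partial results you mention (Apéry, Ball--Rivoal, Zudilin), and later (\S3.4) explains precisely the reduction you sketch: the motivic analogues $\zeta^{\mathrm{mot}}(2n+1)$ and $f_2$ are algebraically independent in the algebra $\mathcal{F}\simeq\mathcal{P}^{\mathrm{mot}}$, so the conjecture would follow from the injectivity of the period morphism, i.e.\ from Grothendieck's period conjecture.

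Your proposal is thus not a proof but an honest and well-informed discussion of why no proof exists, and it matches the paper's own framing essentially point for point. The only thing to flag is terminological: what you write is a \emph{plan conditional on an open conjecture}, not a proof proposal in the usual sense, and you say so yourself in your final paragraph. There is no gap to identify beyond the one you already name --- the period conjecture --- which is exactly the obstruction the paper highlights.
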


Cette conjecture implique notamment que les valeurs zêta impaires sont des nombres transcendants. Aucun progrès n'a été fait dans la direction de cette conjecture avant Apéry, qui a montré l'irrationalité de $\zeta(3)$ en 1979 (voir aussi la jolie preuve alternative par Beukers du théorème d'Apéry \cite{beukersapery}):

\begin{thm}[Apéry \cite{apery}]
$\zeta(3)\notin\mathbb{Q}$.
\end{thm}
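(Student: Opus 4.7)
\emph{Stratégie.} Je suivrai l'approche de Beukers \cite{beukersapery}, qui raffine et simplifie l'argument original d'Apéry. Le point de départ est le critère d'irrationalité élémentaire suivant : si l'on peut construire deux suites de rationnels $(a_n)$ et $(b_n)$ telles que $d_n^3 a_n,\,d_n^3 b_n\in\mathbb{Z}$ (où $d_n=\mathrm{ppcm}(1,2,\ldots,n)$), avec $a_n\zeta(3)-b_n\neq 0$ pour tout $n$ et $d_n^3|a_n\zeta(3)-b_n|\to 0$, alors $\zeta(3)\notin\QQ$. En effet, l'hypothèse $\zeta(3)=p/q$ ferait de $d_n^3(p\,a_n-q\,b_n)$ un entier non nul tendant vers zéro, ce qui est absurde. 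Le théorème des nombres premiers donnant l'équivalent $d_n=e^{n+o(n)}$, il s'agira de produire une approximation dont l'erreur décroît comme $\alpha^n$ avec $\alpha<e^{-3}\simeq 0{,}050$.

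\emph{Les intégrales de Beukers.} On étudie la suite d'intégrales triples
$$I_n=\int_0^1\int_0^1\int_0^1\frac{P_n(x)\,P_n(y)}{1-(1-xy)z}\,dx\,dy\,dz,$$
où $P_n(x)=\frac{1}{n!}\frac{d^n}{dx^n}[x^n(1-x)^n]$ désigne le polynôme de Legendre décalé (à coefficients entiers). Pour le \emph{volet arithmétique}, on développe $P_n(x)P_n(y)$ en monômes à coefficients entiers puis on intègre terme à terme ; les intégrales $\int_{[0,1]^3}x^r y^s/(1-(1-xy)z)\,dx\,dy\,dz$ se calculent explicitement, appartiennent à $\QQ+\QQ\,\zeta(3)$, et ont leurs dénominateurs divisant $d_n^3$ dès que $r,s\leq n$. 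On en déduit une écriture $I_n=a_n\zeta(3)-b_n$ avec $d_n^3 a_n,\,d_n^3 b_n\in\mathbb{Z}$. Pour le \emph{volet analytique}, $n$ intégrations par parties en $x$ (via la forme de Rodrigues de $P_n$), puis le changement de variables involutif $w=(1-z)/(1-(1-xy)z)$, suivi de $n$ intégrations par parties en $y$, conduisent à
$$I_n=\int_0^1\int_0^1\int_0^1\left(\frac{x(1-x)\,y(1-y)\,w(1-w)}{1-(1-xy)w}\right)^{\!n}\!\!\frac{dx\,dy\,dw}{1-(1-xy)w}.$$
Un calcul de points critiques montre que le maximum sur $[0,1]^3$ de l'expression entre parenthèses vaut $(\sqrt{2}-1)^4=17-12\sqrt{2}$, d'où la majoration $|I_n|\leq C\,(\sqrt{2}-1)^{4n}$.

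\emph{Conclusion et difficulté principale.} Numériquement, $(\sqrt{2}-1)^4\simeq 0{,}029$ est strictement inférieur à $e^{-3}\simeq 0{,}050$, ce qui garantit $d_n^3|I_n|\to 0$ grâce au théorème des nombres premiers. La positivité stricte de l'intégrande dans la forme finale assure $I_n>0$ pour tout $n\geq 1$, et le critère d'irrationalité énoncé plus haut s'applique. L'étape la plus subtile est la manipulation analytique : le changement de variables involutif $w=(1-z)/(1-(1-xy)z)$, apparemment miraculeux, est ce qui permet de faire apparaître une intégrande homogène de degré $n$ et d'obtenir la décroissance géométrique au bon taux $(\sqrt{2}-1)^{4n}$ ; sans lui, une estimation directe à partir de la seule formule de Rodrigues ne franchirait pas le seuil $e^{-3}$ dicté par les dénominateurs $d_n^3$, et la méthode échouerait de justesse.
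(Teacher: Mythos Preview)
Le texte ne donne aucune démonstration de ce théorème : c'est un article de survol qui se contente de citer le résultat, en renvoyant à l'article original d'Apéry et en mentionnant au passage \og la jolie preuve alternative par Beukers\fg. Votre proposition suit précisément cette approche de Beukers, et l'esquisse est correcte dans tous ses points essentiels : le critère d'irrationalité avec dénominateurs contrôlés par $d_n^3$, la représentation intégrale triple faisant intervenir les polynômes de Legendre décalés, le volet arithmétique donnant $I_n=a_n\zeta(3)-b_n$ avec $d_n^3 a_n,\, d_n^3 b_n\in\mathbb{Z}$, le changement de variables involutif $w=(1-z)/(1-(1-xy)z)$ menant à la majoration géométrique $(\sqrt{2}-1)^{4n}$, et la comparaison numérique $(\sqrt{2}-1)^4<e^{-3}$ via le théorème des nombres premiers. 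Il n'y a donc rien à comparer : vous fournissez exactement la preuve que le texte se contente d'évoquer. Une précision mineure : dans la présentation usuelle on a même $a_n\in\mathbb{Z}$ (ce sont les nombres d'Apéry), votre hypothèse $d_n^3 a_n\in\mathbb{Z}$ étant plus faible mais tout à fait suffisante pour l'argument.
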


En 2001, Ball et Rivoal ont montré qu'une infinité de valeurs zêta impaires sont irrationnelles. Plus précisément:

\begin{thm}[Ball-Rivoal \cite{rivoalcras, ballrivoal}]
Pour $n\geq 1$, les nombres $1$, $\zeta(3)$, $\zeta(5)$, $\zeta(7)$, $\ldots, \zeta(2n+1)$ engendrent un sous-$\mathbb{Q}$-espace vectoriel de $\mathbb{R}$ de dimension au moins $\frac{1}{3}\log(2n+1)$. En particulier, il existe une infinité de valeurs zêta impaires irrationnelles.
\end{thm}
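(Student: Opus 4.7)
La stratégie consiste à produire, pour chaque $n$ fixé, une suite $(S_m)_{m\geq 1}$ de \emph{petites} formes linéaires
$$S_m = p_{0,m} + p_{1,m}\,\zeta(3) + p_{2,m}\,\zeta(5) + \cdots + p_{n,m}\,\zeta(2n+1)$$
à coefficients rationnels explicites, puis à appliquer le critère d'indépendance linéaire de Nesterenko. Ce critère affirme en substance que si, après multiplication par un dénominateur commun, les $p_{i,m}$ deviennent des entiers bornés par $\alpha^{m+o(m)}$, si $S_m\neq 0$ et si $|S_m|\leq \beta^{-m+o(m)}$ avec $\alpha,\beta>1$, alors la dimension du $\QQ$-espace engendré par $1,\zeta(3),\ldots,\zeta(2n+1)$ est au moins $1+\frac{\log\beta}{\log\alpha}$.

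La construction de ces formes linéaires repose sur des séries hypergéométriques \emph{très bien équilibrées}. Concrètement, pour un paramètre entier auxiliaire $r$ (à optimiser ultérieurement) et $a$ impair de l'ordre de $2n+1$, on considère une série du type
$$S_m = m!^{a-2r}\sum_{k=1}^{\infty}\frac{(k-rm)_{rm}\,(k+m+1)_{rm}}{(k)_{m+1}^{a}}\cdot$$
La fraction rationnelle sommée est \emph{bien équilibrée}: elle est antisymétrique par rapport à l'involution $k\mapsto -k-m$. Par décomposition en éléments simples, $S_m$ s'écrit comme combinaison linéaire à coefficients dans $\QQ$ de $1$ et des valeurs $\zeta(2),\zeta(3),\ldots,\zeta(a)$; l'antisymétrie ci-dessus force alors l'annulation des coefficients des $\zeta(2i)$, ne laissant subsister que $1$ et les valeurs zêta impaires jusqu'à $\zeta(2n+1)$.

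Il reste à établir trois estimations. Primo, une majoration arithmétique du dénominateur commun des $p_{i,m}$ par un facteur de la forme $d_m^{a}$, où $d_m=\mathrm{ppcm}(1,\ldots,m)$ croît comme $e^m$ d'après le théorème des nombres premiers; elle se déduit d'une analyse $p$-adique fine des valuations des coefficients binomiaux apparaissant dans la décomposition en éléments simples. Secundo, une estimation analytique, obtenue par la méthode du col appliquée à une représentation intégrale de $S_m$ de type Barnes, fournit un équivalent asymptotique $|S_m|^{1/m}\to C(r,n)$, quantité que l'on contrôle explicitement en fonction de $r$ et $n$. Tertio, la non-nullité de $S_m$ pour $m$ assez grand résulte de la positivité du terme dominant dans cette représentation intégrale.

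L'obstacle principal, et le cœur technique de la preuve, est l'analyse asymptotique lorsque $n\to\infty$. Il faut choisir $r=r(n)$ croissant de façon optimale avec $n$ pour maximiser le rapport $\log\beta/\log\alpha$ fourni par le critère; ce problème d'optimisation conduit à une équation transcendante faisant intervenir des fonctions de type digamma, dont l'étude asymptotique révèle que le rapport optimal croît comme $\tfrac{1}{3}\log(2n+1)$. C'est cette analyse délicate, absente des travaux antérieurs de Rivoal \cite{rivoalcras}, qui fournit la borne explicite de l'énoncé et, par conséquent, l'infinité de valeurs zêta impaires irrationnelles annoncée.
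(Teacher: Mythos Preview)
The paper itself does not prove this theorem. It is a survey: the Ball--Rivoal result is merely stated, and the very next paragraph explicitly sends the reader to Rivoal's text in the same volume and to Fischler's Bourbaki exposé for the proof techniques. There is therefore no proof in the paper to compare your proposal against.

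That said, your sketch is a faithful outline of the actual argument in \cite{rivoalcras, ballrivoal}: Nesterenko's linear independence criterion applied to small linear forms arising from a well-poised hypergeometric series, the symmetry $k\mapsto -k-m$ forcing the coefficients of the even zeta values to vanish, arithmetic control of denominators by a power of $d_m=\mathrm{ppcm}(1,\ldots,m)$, and saddle-point asymptotics for $|S_m|$. One correction, however, concerns your final historical remark. The asymptotic analysis leading to the $\tfrac{1}{3}\log(2n+1)$ bound is \emph{not} ``absente des travaux antérieurs de Rivoal'': it is precisely the content of the CRAS note \cite{rivoalcras}, which already announces the infinitude of irrational odd zeta values with this logarithmic lower bound. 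What the joint paper \cite{ballrivoal} adds is the detailed write-up together with Ball's series (your $S_m$, essentially, possibly up to the extra factor $k+\tfrac{m}{2}$ that makes the parity argument work uniformly in $m$), which streamlines the arithmetic estimates; the asymptotic optimisation in $r$ was there from the start.
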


Comme $\frac{1}{3}\log(2n+1)\geq 3$ pour $2n+1\geq 8105$, il existe une valeur zêta impaire $\mathbb{Q}$-linéairement indépendante de $1$ et $\zeta(3)$, et donc irrationnelle, parmi $\zeta(5), \zeta(7),\ldots, \zeta(8105)$. Les bornes plus précises de Ball-Rivoal permettent d'arrêter cette liste à $\zeta(169)$. Le record de ce genre de résultat est détenu par Zudilin:

\begin{thm}[Zudilin \cite{zudilinfour}]
Il existe au moins un irrationnel parmi les nombres $\zeta(5)$, $\zeta(7)$, $\zeta(9)$, $\zeta(11)$.
\end{thm}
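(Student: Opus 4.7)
Mon plan suit la stratégie générale développée par Apéry, puis systématisée par Ball, Rivoal et Zudilin : construire une suite de formes linéaires rationnelles
$$L_n = A_{0,n} + A_{5,n}\zeta(5) + A_{7,n}\zeta(7) + A_{9,n}\zeta(9) + A_{11,n}\zeta(11)$$
non nulles, tendant vers $0$, et telles que $D_n L_n \in \mathbb{Z}$ pour un entier $D_n$ explicite. Si l'on avait $\zeta(2k+1) \in \QQ$ pour $k \in \{2,3,4,5\}$, il existerait un entier $Q > 0$ tel que $Q\,D_n L_n$ soit une suite d'entiers non nuls tendant vers $0$, ce qui est absurde. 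La difficulté est donc double : fabriquer des $L_n$ n'impliquant que ces quatre valeurs zêta impaires, et contrôler $D_n$ assez finement.

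Pour la construction, j'utiliserais une série hypergéométrique \emph{très bien équilibrée} (very-well-poised) à la Nesterenko-Rivoal, à paramètres entiers $r$, $s$ bien choisis (optimisés à la toute fin). La symétrie $t \mapsto -t-n$ caractéristique de cette classe de séries, combinée à la décomposition en éléments simples du terme général, force l'annulation des coefficients des zêtas de poids \emph{pair} dans l'écriture comme combinaison linéaire de $1$ et de valeurs zêta : on obtient une forme $L_n = S_n$ comme voulu, avec des coefficients $A_{i,n}$ donnés par des sommes finies explicites de binomiaux. La non-nullité de $L_n$ s'obtient comme sous-produit de son asymptotique précise.

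Le contrôle de la taille de $L_n$ passe par la méthode du col appliquée à une représentation intégrale complexe (de type Barnes-Mellin) de $S_n$ : on obtient $n^{-1}\log|L_n| \to c_0$ pour une constante $c_0 < 0$ explicitement calculable en fonction des paramètres, et parallèlement $n^{-1}\log|A_{i,n}| \to c_1 > 0$. La borne de dénominateur \og naïve \fg{} est $D_n = d_n^{11}$ avec $d_n = \mathrm{ppcm}(1,\ldots,\ell n) \sim \mathrm{e}^{\ell n}$ (théorème des nombres premiers). Avec ces seules bornes, on ne fait pas mieux que le seuil $\zeta(169)$ de Ball-Rivoal : la condition $c_0 + 11\,\ell < 0$ ne peut être satisfaite à ces petits poids, quel que soit le choix des paramètres.

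L'obstacle principal, et la contribution cruciale de Zudilin, est l'\emph{économie arithmétique} sur $D_n$. Pour une large classe de nombres premiers $p$ vérifiant des congruences bien choisies modulo $n$, on démontre par analyse $p$-adique fine des factorielles et coefficients binomiaux apparaissant dans les $A_{i,n}$ que leur $p$-valuation effective dépasse strictement celle prédite par $d_n^{11}$. On en déduit qu'un facteur $\Phi_n$ divise tous les $d_n^{11} A_{i,n}$, et son asymptotique $n^{-1}\log\Phi_n \to \kappa > 0$ s'obtient via le théorème des nombres premiers dans les progressions arithmétiques. En optimisant simultanément $r$, $s$ et l'économie $\kappa$, on parvient à l'inégalité décisive $c_0 + 11\,\ell - \kappa < 0$, qui fournit la contradiction cherchée. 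Cette combinatoire $p$-adique délicate, héritière des travaux de Rhin-Viola sur $\zeta(2)$ et $\zeta(3)$, est à la fois l'étape la plus technique et la nouveauté conceptuelle décisive qui autorise la descente jusqu'à $\zeta(11)$.
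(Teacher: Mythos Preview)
The paper does not prove this theorem: it is a survey, and Zudilin's result is simply quoted with a reference to \cite{zudilinfour} (and the reader is further directed to Rivoal's companion text and to Fischler's Bourbaki exposé \cite{fischlerbourbaki} for the techniques). There is therefore no proof in the paper against which to compare your proposal.

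That said, your outline is a faithful high-level summary of Zudilin's actual argument: the very-well-poised hypergeometric construction producing linear forms in $1,\zeta(5),\zeta(7),\zeta(9),\zeta(11)$ only, the saddle-point asymptotics, and the crucial $p$-adic denominator saving in the spirit of Rhin--Viola that makes the descent to $\zeta(11)$ possible. As a sketch it is correct in spirit; the genuinely hard work --- the precise choice of parameters and the detailed arithmetic lemma quantifying the gain $\kappa$ --- is of course where all the content lies, and your proposal acknowledges this without carrying it out.
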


Nous renvoyons le lecteur au texte de Tanguy Rivoal dans ce volu\-me pour une discussion des techniques employées dans les preuves de tels énoncés, et à l'exposé au séminaire Bourbaki de Fischler \cite{fischlerbourbaki} pour plus de détails. L'irrationalité de $\zeta(5)$ ou de $\zeta(3)/\pi^3$, ou encore la transcendance de $\zeta(3)$, sont des problèmes ouverts.

C'est l'expérimentation qui permet de donner du poids à la conjecture \ref{conj:zeta impairs}. Une relation algébrique entre des nombres réels étant une relation linéaire entre des monômes formés de ces nombres, la conjecture revient à montrer qu'il n'existe aucune relation linéaire à coefficients entiers entre ces monômes. Les algorithmes de détection de relations entières comme PSLQ \cite{pslq} permettent de trouver des combinaisons linéaires \og petites\fg de certains nombres réels calculés avec une précision suffisante. Ils peuvent être utilisés de deux manières:
\begin{enumerate}
\item pour découvrir des relations entre ces nombres, qu'on peut ensuite essayer de démontrer mathématiquement (par exemple, trouver le polynôme minimal d'un nombre algébrique calculé avec une précision suffisante) ;
\item pour se convaincre de l'inexistence de relations en donnant des bornes inférieures sur la taille (degré et valeur absolue des coefficients) de relations potentielles entre ces nombres.
\end{enumerate}
On trouvera une application de cette dernière stratégie au test de la transcendance de $\zeta(3)$ dans \cite{baileyferguson}. Nous verrons plus loin que la conjecture \ref{conj:zeta impairs} est impliquée par une conjecture très générale, la \emph{conjecture des périodes} de Grothendieck, ce qui constitue une autre source, cette fois plus théorique, de certitude.

\section{L'algèbre des valeurs zêta multiples}\label{sec:MZV}

\subsection{Des valeurs zêta simples aux valeurs zêta multiples}

La conjecture \ref{conj:zeta impairs} traite de relations \emph{algébriques} entre valeurs zêta simples et nécessite donc de considérer des produits de tels nombres. On remarque alors que ces produits peuvent s'exprimer comme des combinaisons linéaires de valeurs zêta multiples, par exemple:
\begin{equation}\label{eq:zeta m zeta n}
\zeta(m)\zeta(n) = \zeta(m,n)+\zeta(n,m)+\zeta(m+n).
\end{equation}
Cette égalité se prouve aisément en décomposant le domaine de sommation:
\begin{equation}\label{eq:decoupage quasi shuffle}
\biggl(\sum_{k=1}^\infty\frac{1}{k^m} \biggr) \biggl(\sum_{l=1}^\infty\frac{1}{l^n} \biggr)= \biggl( \sum_{1\leq k<l} +\sum_{1\leq l<k}+\sum_{1\leq k=l} \biggr) \frac{1}{k^ml^n}\cdot
\end{equation}
Ce calcul montre que l'étude des relations \emph{algébriques} entre valeurs zêta simples est contenue dans l'étude de relations \emph{linéaires} entre valeurs zêta multiples. Autre observation importante: le membre de droite de \eqref{eq:zeta m zeta n} fait intervenir trois valeurs zêta multiples dont les indices somment à $m+n$. Cela justifie la définition suivante:

\begin{defi}
Le \emph{poids} d'une valeur zêta multiple \eqref{eq:defiMZV} est la somme des indices $n_1+\cdots+n_r$.
\end{defi}

Le calcul \eqref{eq:zeta m zeta n} a la généralisation suivante qu'on conseille au lecteur de prouver en anticipant le paragraphe \ref{sec:double melange}:

\begin{prop}\label{prop: produit MZVs}
Le produit de deux valeurs zêta multiples de poids $m$ et $n$ peut s'écrire comme une combinaison $\mathbb{Z}$-linéaire de valeurs zêta multiples de poids $m+n$.
\end{prop}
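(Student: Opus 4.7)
L'idée est de généraliser la décomposition \eqref{eq:decoupage quasi shuffle} par un argument dit de \emph{quasi-mélange}. Soient $\zeta(a_1,\ldots,a_r)$ et $\zeta(b_1,\ldots,b_s)$ deux valeurs zêta multiples de poids respectifs $m$ et $n$; leur produit s'écrit
$$\sum_{\substack{1\leq k_1<\cdots <k_r\\ 1\leq l_1<\cdots <l_s}}\frac{1}{k_1^{a_1}\cdots k_r^{a_r}\,l_1^{b_1}\cdots l_s^{b_s}}\cdot$$
Je commencerais par partitionner ce domaine de sommation selon l'ordre relatif des deux suites $(k_i)$ et $(l_j)$, en autorisant les égalités entre certains $k_i$ et $l_j$. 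Chaque part correspond à un \emph{quasi-mélange} des deux multi-indices $(a_1,\ldots,a_r)$ et $(b_1,\ldots,b_s)$: le choix d'un entier $t$ avec $\max(r,s)\leq t\leq r+s$, et de deux applications strictement croissantes $\varphi\colon\{1,\ldots,r\}\to\{1,\ldots,t\}$ et $\psi\colon\{1,\ldots,s\}\to\{1,\ldots,t\}$ dont la réunion des images vaut $\{1,\ldots,t\}$.

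Sur la part associée à un tel quasi-mélange, on renomme les valeurs des $k_i$ et $l_j$ en une unique suite strictement croissante $m_1<\cdots<m_t$ via $k_i=m_{\varphi(i)}$ et $l_j=m_{\psi(j)}$. L'exposant en position $u$ devient la somme des $a_i$ pour $\varphi(i)=u$ et des $b_j$ pour $\psi(j)=u$, de sorte que la somme totale des nouveaux exposants vaut exactement $a_1+\cdots+a_r+b_1+\cdots+b_s=m+n$. L'exposant en position $t$ est par ailleurs minoré par $\min(a_r,b_s)\geq 2$, ce qui assure la convergence. La contribution à cette part est donc une valeur zêta multiple de poids $m+n$, avec coefficient $+1$.

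L'étape principale, de nature purement combinatoire, consiste à vérifier que les quasi-mélanges partitionnent effectivement le domaine de sommation: à toute paire de suites strictement croissantes $(k_i)$ et $(l_j)$ on associe un unique quasi-mélange, déterminé par la suite strictement croissante obtenue en prenant la réunion de $\{k_i\}$ et $\{l_j\}$ avec identification des coïncidences. Sommer sur l'ensemble (fini) des quasi-mélanges fournit alors le produit comme une combinaison $\mathbb{Z}_{\geq 0}$-linéaire — en particulier $\mathbb{Z}$-linéaire — de valeurs zêta multiples de poids $m+n$, ce qui conclut.
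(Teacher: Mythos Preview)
Ta démonstration est correcte et suit exactement l'approche suggérée par l'article, à savoir la décomposition par quasi-mélange du domaine de sommation, développée formellement au paragraphe~\ref{sec:double melange} (proposition~\ref{prop:stuffle}). Ton paramétrage explicite des quasi-mélanges par $(t,\varphi,\psi)$ et ta vérification de la convergence via l'exposant en position~$t$ rendent même l'argument plus détaillé que ce que l'article esquisse.
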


Les $2^{n-2}$ valeurs zêta multiples de poids $n$ ne sont pas linéairement indépendantes sur $\QQ$, comme le montre l'égalité des deux valeurs zêta multiples de poids $3$, déjà connue d'Euler:
$$\zeta(1,2)=\zeta(3).$$
De même, les quatre valeurs zêta multiples de poids $4$ sont colinéaires sur~$\mathbb{Q}$:
$$\zeta(1,1,2)=\zeta(4) \; ,\; \zeta(1,3)=\frac{1}{4}\zeta(4)\; ,\; \zeta(2,2)=\frac{3}{4}\zeta(4).$$
Nous verrons au paragraphe \ref{sec:double melange} une manière systématique de produire des relations linéaires entre valeurs zêta multiples. Contentons-nous pour l'instant d'estimer le \emph{nombre de relations} entre valeurs zêta multiples d'un poids donné. Pour cela, introduisons le sous-$\QQ$-espace vectoriel $\mathcal{Z}_n\subset \mathbb{R}$ engendré par les valeurs zêta multiples de poids $n$. Il est commode de poser $\zeta(\,)=1$ dans le cas $r=n=0$, de sorte que $\mathcal{Z}_0=\QQ$. Soit $\mathcal{Z}$ le sous-$\QQ$-espace vectoriel de $\mathbb{R}$ engendré par \emph{toutes} les valeurs zêta multiples ; c'est la somme des $\mathcal{Z}_n$ pour $n\geq 0$. La proposition \ref{prop: produit MZVs} implique que $\mathcal{Z}$ est une sous-$\QQ$-algèbre de $\mathbb{R}$.

\begin{defi}
On appelle $\mathcal{Z}$ l'\emph{algèbre des valeurs zêta multiples}.
\end{defi}

C'est la structure de cette algèbre qu'on souhaite étudier afin d'\emph{organiser} les constantes mathématiques fondamentales que sont les valeurs zêta multiples.

\subsection{Dimensions des espaces de valeurs zêta multiples}

En calculant des approximations suffisamment précises de toutes les valeurs zêta multiples de poids $\leq 12$ et en s'appuyant sur un algorithme de détection de relations entières, Zagier est arrivé à la conjecture suivante:

\begin{conj}[Zagier \cite{zagiervalues}]\mbox{}\label{conj:zagier dim}
\begin{enumerate}
\item On a une décomposition en somme directe:
$$\mathcal{Z}=\bigoplus_{n\geq 0}\mathcal{Z}_n.$$
Dit autrement, $\mathcal{Z}$ est \emph{graduée} par le poids: il n'y a pas de relations $\QQ$\nobreakdash-linéaires non triviales entre valeurs zêta multiples de poids différents.
\item On a pour tout entier $n$:
$$\dim_\QQ(\mathcal{Z}_n)=d_n,$$
où $(d_n)$ est la suite définie par récurrence par
$$\begin{cases} d_0=1 \; ,\; d_1=0 \; ,\; d_2=1 \; ; \\ d_n = d_{n-2}+d_{n-3} & (n\geq 3).\end{cases}$$
\end{enumerate}
\end{conj}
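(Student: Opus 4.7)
The plan is to lift the problem into a motivic setting where the weight grading and the dimensions of graded pieces are controlled by a Tannakian Galois group, and then transfer the conclusion back to $\mathcal{Z}$ via a period map. Following Goncharov and Deligne-Goncharov, I would construct a commutative graded $\QQ$-Hopf algebra $\mathcal{H}=\bigoplus_{n\geq 0}\mathcal{H}_n$ containing motivic lifts $\zeta^{\mathfrak{m}}(n_1,\ldots,n_r)\in\mathcal{H}_{n_1+\cdots+n_r}$, together with a surjective $\QQ$-algebra homomorphism
$$\mathrm{per}\colon\mathcal{H}\longrightarrow\mathcal{Z},\qquad \zeta^{\mathfrak{m}}(n_1,\ldots,n_r)\longmapsto\zeta(n_1,\ldots,n_r).$$
Zagier's conjecture then splits into two independent pieces: computing $\dim_\QQ\mathcal{H}_n$, a question about the motivic Galois group; and showing that $\mathrm{per}$ is injective, an instance of Grothendieck's period conjecture which delivers both the direct-sum decomposition (1) and the equality in (2).

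For the dimension count I would identify $\mathcal{H}$ with the affine ring of the motivic Galois group of the Tannakian category $\mathrm{MT}(\mathbb{Z})$ of mixed Tate motives over $\mathbb{Z}$. This group is a semidirect product $\mathbb{G}_m\ltimes U$, where the Lie algebra of $U$ is, by Deligne-Goncharov, free with exactly one generator in each odd weight $2k+1$, $k\geq 1$ — the key input being Borel's computation of $K_{2k-1}(\mathbb{Z})\otimes\QQ$. A Poincaré-Birkhoff-Witt calculation, combined with the weight-$2$ piece coming from $\zeta^{\mathfrak{m}}(2)$, then yields the Hilbert series
$$\sum_{n\geq 0}\dim_\QQ(\mathcal{H}_n)\,t^n=\frac{1}{1-t^2-t^3},$$
which is exactly the generating series of $(d_n)$. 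Surjectivity of $\mathrm{per}$ immediately gives the upper bound $\dim_\QQ\mathcal{Z}_n\leq d_n$, unconditionally. A refinement due to Brown, that the motivic MZVs with indices in $\{2,3\}$ already span $\mathcal{H}$, makes the construction of $\mathcal{H}$ fully effective; it proceeds by induction on weight using the motivic coaction together with a Zagier-type identity for $\zeta(2,\ldots,2,3,2,\ldots,2)$ lifted to $\mathcal{H}$.

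The main obstacle is the injectivity of $\mathrm{per}$. This is exactly Grothendieck's period conjecture applied to $\mathrm{MT}(\mathbb{Z})$, and it is the genuine source of both the strict equality in (2) and the absence of relations across weights in (1). All currently available techniques constrain relations \emph{between motivic symbols} through the coaction, but cannot rule out a hidden $\QQ$-linear relation between actual real numbers; producing one would disprove Zagier's conjecture, while ruling all of them out amounts to this deep transcendence statement. In summary, the strategy above unconditionally establishes the upper bound and reduces Zagier's full conjecture to the injectivity of $\mathrm{per}$, which remains open and constitutes the essential difficulty.
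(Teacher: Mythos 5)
Your proposal is essentially the route the paper itself describes: lift to the motivic setting where $\mathcal{Z}^{\mathrm{mot}}\subset\mathcal{P}^{\mathrm{mot}}\simeq\mathcal{F}$ (via Borel's computation of $K_{2k-1}(\mathbb{Z})\otimes\QQ$ and the freeness of the Lie algebra of the prounipotent radical of the Tannakian Galois group of $\mathrm{MT}(\mathbb{Z})$), deduce the unconditional upper bound $\dim_\QQ\mathcal{Z}_n\leq d_n$ of Goncharov--Terasoma from surjectivity of $\mathrm{per}$, and observe that both parts of Zagier's statement then become equivalent to injectivity of $\mathrm{per}$, i.e.\ Grothendieck's period conjecture for $\mathrm{MT}(\mathbb{Z})$. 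The statement is labelled a conjecture in the paper precisely because that last step is open, and you correctly identify and do not paper over this gap.
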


La suite $(d_n)$ est une variante \og cubique\fg de la suite de Fibonacci et a un comportement asymptotique donné par $d_n\sim C\alpha^n$, où $C$ est une constante et $\alpha\simeq 1,3247$ est la racine réelle du polynôme caractéristique $X^3-X-1$. Elle est donc bien inférieure au nombre $2^{n-2}$ de valeurs zêta multiples en poids $n$, ce qui suggère un grand nombre de relations linéaires entre valeurs zêta multiples d'un poids donné.

\begin{figure}[htb]\tabcolsep4.3pt
\begin{center}
\begin{tabular}{ c|c c c c c c c c c c c c c c }
$n$ &$0$& $1$ & $2$ & $3$ & $4$ & $5$ & $6$ & $7$ & $8$ & $9$ & $10$ & $11$ & $12$ & $13$ \\
\hline
$d_n$ & $1$ & $0$ & $1$ & $1$ & $1$ & $2$ & $2$ & $3$ & $4$ & $5$ & $7$ & $9$ & $12$ & $16$\\
$2^{n-2}$ & $-$ & $-$ & $1$ & $2$ & $4$ & $8$ & $16$ & $32$ & $64$ & $128$ & $256$ & $512$ & $1024$ & $2048$ \\
\end{tabular}
\end{center}
\end{figure}

L'égalité $\dim_\QQ(\mathcal{Z}_n)=d_n$ n'est connue que pour $n\leq 4$. Pour $n=5$ on peut prouver (par exemple en utilisant les relations de double \hbox{mélange} présentées au paragraphe \ref{sec:double melange}) que les $8$ valeurs zêta multiples de poids $5$ sont toutes des combinaisons $\QQ$-linéaires de $\zeta(2,3)$ et $\zeta(3,2)$. On ne sait cependant pas si le quotient de ces deux nombres est irrationnel. 

Un autre enseignement de ces calculs expérimentaux est qu'il y a conjecturalement bien plus dans l'algèbre des valeurs zêta multiples que des polynômes en les valeurs zêta simples $\zeta(n)$, qui ne contribuent au comptage des dimensions que par un facteur sous-exponentiel. La~première occurrence d'une valeur zêta multiple qui ne s'exprime pas conjecturalement en fonction de valeurs zêta simples est en poids~$8$: l'expérimentation suggère par exemple que $\zeta(3,5)$ et $\zeta(5,3)$ ne sont pas des combinaisons $\QQ$-linéaires de $\zeta(2)^3$, $\zeta(3)^2$ et $\zeta(3)\zeta(5)$ --- même si leur somme l'est par \eqref{eq:zeta m zeta n}.

La borne supérieure dans la conjecture \ref{conj:zagier dim} a été prouvée indépendamment par Goncharov et Terasoma:

\begin{thm}[Goncharov \cite{goncharovmzvmodular}, Terasoma \cite{terasoma}]\label{thm:goncharov terasoma}
On a pour tout entier $n$: $$\dim_\QQ(\mathcal{Z}_n)\leq d_n.$$
\end{thm}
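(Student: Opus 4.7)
Mon approche, suivant la stratégie de Goncharov et Terasoma, consisterait à relever les valeurs zêta multiples au niveau motivique. Je construirais d'abord une $\QQ$-algèbre graduée $\mathcal{H}=\bigoplus_{n\geq 0}\mathcal{H}_n$ de \emph{valeurs zêta multiples motiviques} $\zeta^{\mathfrak{m}}(n_1,\ldots,n_r)$, via les intégrales itérées motiviques sur $\mathbb{P}^1\setminus\{0,1,\infty\}$ avec points base tangentiels rationnels en $0$ et $1$. Cette algèbre est munie d'un morphisme gradué de périodes $\mathrm{per}:\mathcal{H}\to\mathbb{R}$ envoyant $\zeta^{\mathfrak{m}}(n_1,\ldots,n_r)$ sur $\zeta(n_1,\ldots,n_r)$, de sorte que $\mathcal{Z}_n=\mathrm{per}(\mathcal{H}_n)$ et qu'il suffit d'établir la majoration $\dim_\QQ\mathcal{H}_n\leq d_n$.

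Je m'appuierais alors sur le théorème de Deligne-Goncharov affirmant que les groupes fondamentaux motiviques considérés sont des pro-objets de la catégorie tannakienne $\mathrm{MT}(\mathbb{Z})$ des motifs de Tate mixtes non ramifiés sur $\mathbb{Z}$. L'algèbre $\mathcal{H}$ se plonge alors, à un facteur polynomial près en $\zeta^{\mathfrak{m}}(2)$, dans l'anneau des fonctions du groupe pro-unipotent de Galois motivique $U^{\mathrm{mot}}$ associé. La structure de $\mathrm{MT}(\mathbb{Z})$ est entièrement contrôlée par ses Ext entre objets de Tate: via la formule de Beilinson reliant Ext motiviques et K-théorie algébrique, le calcul classique de Borel fournit
$$\dim_\QQ\mathrm{Ext}^1_{\mathrm{MT}(\mathbb{Z})}(\QQ(0),\QQ(n)) = \begin{cases} 1 & \text{si } n\geq 3 \text{ est impair}, \\ 0 & \text{sinon},\end{cases}$$
les $\mathrm{Ext}^i$ étant nuls pour $i\geq 2$. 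Il en résulte que $U^{\mathrm{mot}}$ a pour algèbre de Lie l'algèbre de Lie libre sur un générateur en chaque degré impair $\geq 3$, et la formule de Witt donne la série de Hilbert $(1-t^2)/(1-t^2-t^3)$ pour son algèbre enveloppante. En tenant compte du facteur $\QQ[\zeta^{\mathfrak{m}}(2)]$ de série $1/(1-t^2)$, on obtient
$$\sum_{n\geq 0}\dim_\QQ\mathcal{H}_n\,t^n \leq \frac{1}{1-t^2-t^3} = \sum_{n\geq 0}d_n\,t^n,$$
d'où la borne annoncée.

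Le véritable cœur technique --- et le principal obstacle --- est le théorème de constructibilité motivique sous-jacent: il faut vérifier que le groupe fondamental motivique de $\mathbb{P}^1\setminus\{0,1,\infty\}$, muni de points base tangentiels en $0$ et $1$, est effectivement un (pro-)ind-objet de $\mathrm{MT}(\mathbb{Z})$, autrement dit qu'il est non ramifié en chaque nombre premier. Ceci exploite la bonne réduction partout de $\mathbb{P}^1\setminus\{0,1,\infty\}$ et la nature non ramifiée des points tangentiels choisis, et repose sur les constructions délicates de la catégorie des motifs mixtes (Beilinson-Deligne, Levine, Voevodsky). Notons enfin que cette approche n'établit que la majoration: l'égalité conjecturale $\dim_\QQ\mathcal{Z}_n=d_n$ équivaudrait à l'injectivité de $\mathrm{per}$, conséquence attendue de la conjecture des périodes de Grothendieck, et reste une question ouverte.
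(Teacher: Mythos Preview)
Votre proposition est correcte et suit essentiellement la même stratégie que celle esquissée dans l'article: relèvement motivique des valeurs zêta multiples via le groupe fondamental motivique de $\mathbb{P}^1\setminus\{0,1,\infty\}$ avec points-base tangentiels, appartenance à la catégorie $\mathrm{MT}(\mathbb{Z})$ des motifs de Tate mixtes sur $\mathbb{Z}$ (bonne réduction partout), puis contrôle de la taille de cette catégorie par le calcul de Borel de la $K$-théorie rationnelle des entiers, donnant l'isomorphisme $\mathcal{P}^{\mathrm{mot}}\simeq\mathcal{F}$ et la chaîne d'inégalités $\dim_{\mathbb{Q}}\mathcal{Z}_n\leq\dim_{\mathbb{Q}}\mathcal{P}^{\mathrm{mot}}_n=d_n$. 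Vous fournissez d'ailleurs plus de détails techniques (groupes d'extensions, série de Hilbert de l'algèbre enveloppante) que le traitement, de niveau survol, de l'article lui-même; une précision terminologique mineure: le calcul de la série de Hilbert $(1-t^2)/(1-t^2-t^3)$ relève plutôt de Poincaré--Birkhoff--Witt que de la formule de Witt proprement dite.
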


De manière surprenante, la preuve de ce résultat ne consiste pas à produire suffisamment de relations $\mathbb{Q}$-linéaires entre valeurs zêta multiples, et ignore même complètement l'existence de familles remar\-quables de telles relations ! Elle s'appuie plutôt sur des idées venant de la géométrie algébrique et que nous introduirons dans la suite de ce texte, voir le paragraphe \ref{subsec:MZV mot first}.

La conjecture suivante, due à Hoffman, propose une base conjecturale de l'espace des valeurs zêta multiples.

\begin{conj}[Hoffman \cite{hoffman}]\label{conj:hoffman} Les valeurs zêta multiples $\zeta(n_1,\ldots,n_r)$ avec $n_i\in\{2,3\}$ forment une base du $\QQ$-espace vectoriel~$\mathcal{Z}$.
\end{conj}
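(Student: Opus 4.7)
Le plan consiste à séparer la conjecture en deux énoncés indépendants: (a) les éléments de Hoffman engendrent $\mathcal{Z}$ comme $\QQ$-espace vectoriel, et (b) ils y sont linéairement indépendants. Pour (a), je passerais au cadre motivique en introduisant l'algèbre de Hopf graduée $\mathcal{H}$ des valeurs zêta multiples \emph{motiviques} $\zeta^{\mathfrak{m}}(n_1,\ldots,n_r)$, ainsi qu'une application période $\mathrm{per}\colon\mathcal{H}\to\mathbb{R}$ envoyant $\zeta^{\mathfrak{m}}$ sur $\zeta$. Comme l'image de $\mathrm{per}$ est exactement $\mathcal{Z}$, il suffit de montrer que les éléments de Hoffman motiviques engendrent $\mathcal{H}$.

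L'outil fondamental est le coproduit motivique $\Delta\colon\mathcal{H}\to\mathcal{H}\otimes\mathcal{A}$, où $\mathcal{A}=\mathcal{H}/\zeta^{\mathfrak{m}}(2)\mathcal{H}$, ainsi que ses composantes infinitésimales $D_{2r+1}$ qui détectent le comportement aux poids impairs. On introduit sur $\mathcal{H}$ une filtration par le \emph{niveau} indexée par le nombre d'indices impairs (qui, pour les éléments de Hoffman, coïncide avec le nombre de $3$) et on raisonne par double récurrence sur le poids et le niveau. L'étape clé consiste à exprimer les valeurs du type $\zeta^{\mathfrak{m}}(2,\ldots,2,3,2,\ldots,2)$ en fonction des $\zeta^{\mathfrak{m}}(2k+1)\,\zeta^{\mathfrak{m}}(2)^\ell$ via le théorème de Zagier, et à vérifier qu'une certaine matrice de coefficients binomiaux apparaissant dans le calcul est inversible sur $\QQ$. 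En appliquant les $D_{2r+1}$ à une valeur motivique arbitraire et en travaillant modulo les termes de niveau strictement inférieur, on la réécrit comme combinaison $\QQ$-linéaire d'éléments de Hoffman motiviques: c'est essentiellement la preuve de Brown.

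Pour (b), on observe que le nombre d'éléments de Hoffman de poids $n$ satisfait la récurrence $h_n=h_{n-2}+h_{n-3}$ avec les mêmes valeurs initiales que $d_n$, donc $h_n=d_n$. Combiné au théorème \ref{thm:goncharov terasoma} de Goncharov-Terasoma, le point (a) montre alors que l'indépendance linéaire des éléments de Hoffman équivaut à la partie dimensionnelle $\dim_\QQ(\mathcal{Z}_n)=d_n$ de la conjecture \ref{conj:zagier dim} de Zagier. C'est là que réside l'obstacle principal: aucune minoration non triviale de $\dim_\QQ(\mathcal{Z}_n)$ n'est connue, une telle minoration fournissant des énoncés de transcendance aujourd'hui hors d'atteinte --- déjà en poids $5$, l'irrationalité du quotient $\zeta(3,2)/\zeta(2,3)$ est ouverte. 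L'approche motivique donne toutefois gratuitement l'analogue de (b) pour $\mathcal{H}$, puisque $\dim_\QQ\mathcal{H}_n=d_n$ est connu via la structure du groupe de Galois motivique ; transférer cette information de $\mathcal{H}$ à $\mathcal{Z}$ revient exactement à l'injectivité de $\mathrm{per}$, un cas particulier de la conjecture des périodes de Grothendieck.
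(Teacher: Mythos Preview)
L'énoncé en question est une \emph{conjecture}: le papier ne la démontre pas et ne prétend pas le faire. Ta proposition reconnaît clairement cet état de fait: tu sépares correctement l'énoncé en (a) la partie génératrice, qui est le théorème~\ref{thm:brown} de Brown, et (b) l'indépendance linéaire, que tu identifies comme ouverte et équivalente (via l'argument de dimension $h_n=d_n$ et le théorème~\ref{thm:goncharov terasoma}) au cas particulier pertinent de la conjecture des périodes. C'est exactement l'analyse que fait le papier au paragraphe~\ref{subsec:MZV mot first}, où il est expliqué que les conjectures~\ref{conj:zagier dim}, \ref{conj:hoffman} et~\ref{conj:structure algebre Z} sont toutes satisfaites au niveau motivique et sont donc logiquement équivalentes à l'injectivité du morphisme de période.

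Ton esquisse de la preuve de Brown pour (a) --- passage aux $\zeta^{\mathfrak{m}}$, coaction motivique et opérateurs $D_{2r+1}$, filtration par le niveau, théorème de Zagier sur les $\zeta(2,\ldots,2,3,2,\ldots,2)$ et inversibilité d'une matrice $2$-adique --- est fidèle à l'argument original et plus détaillée que ce que le papier expose (celui-ci se contente d'évoquer l'action du groupe de Galois motivique et le principe de reconnaissance au paragraphe~\ref{sec:applis pi1mot}). Il n'y a donc ni erreur ni divergence d'approche: ta proposition est un commentaire correct sur le statut de la conjecture, en accord avec le papier.
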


Elle implique la formule de dimension de Zagier (conjecture \ref{conj:zagier dim}) puisque~$d_n$ est le nombre de mots $(n_1,\ldots,n_r)$ de longueur quelconque formés de $2$ et de $3$ et satisfaisant à $n_1+\cdots+n_r=n$. Brown a montré que les \og valeurs zêta multiples de Hoffman\fg (celles dont les arguments sont des $2$ et des $3$) engendrent l'espace des valeurs zêta multiples:

\begin{thm}[Brown \cite{brownMTMZ}]\label{thm:brown}
Les valeurs zêta multiples $\zeta(n_1,\ldots,n_r)$ avec $n_i\in\{2,3\}$ engen\-drent le $\mathbb{Q}$-espace vectoriel $\mathcal{Z}$.
\end{thm}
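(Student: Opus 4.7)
Le plan est de relever le problème au niveau motivique. Suivant Goncharov et Brown, on construit des \emph{valeurs zêta multiples motiviques} $\zeta^{\mathfrak{m}}(n_1,\ldots,n_r)$ dans une $\mathbb{Q}$-algèbre de Hopf graduée $\mathcal{H}$ — l'algèbre des périodes motiviques de la catégorie tannakienne des motifs de Tate mixtes sur $\mathbb{Z}$ — munie d'un homomorphisme période $\mathrm{per}\colon\mathcal{H}\to\mathbb{R}$ envoyant $\zeta^{\mathfrak{m}}$ sur $\zeta$. Puisque l'image de $\mathrm{per}$ est précisément $\mathcal{Z}$, il suffit de prouver l'analogue motivique: les éléments de Hoffman motiviques $\zeta^{\mathfrak{m}}(n_1,\ldots,n_r)$ avec $n_i\in\{2,3\}$ engendrent $\mathcal{H}$ sur $\mathbb{Q}$. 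Le gain est décisif, car $\mathcal{H}$ porte une coaction du groupe de Galois motivique, structure que $\mathcal{Z}$ ne voit pas a priori.

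Posant $\mathcal{A}=\mathcal{H}/\zeta^{\mathfrak{m}}(2)\mathcal{H}$, on dispose d'une coaction $\Delta\colon\mathcal{H}\to\mathcal{A}\otimes\mathcal{H}$, donnée sur les intégrales itérées motiviques par la formule combinatoire de Goncharov, dont la forme infinitésimale se décompose en dérivations $D_{2r+1}\colon\mathcal{H}_n\to\mathcal{L}_{2r+1}\otimes\mathcal{H}_{n-2r-1}$ pour chaque $2r+1<n$ impair, où $\mathcal{L}_{\bullet}$ est la cogèbre de Lie des indécomposables de $\mathcal{A}$. La structure connue de $\mathcal{H}$ (provenant de l'identification du groupe de Galois motivique des motifs de Tate mixtes sur $\mathbb{Z}$, essentiellement due à Deligne et Goncharov) assure que le noyau de $D_{<n}=\bigoplus_{2r+1<n}D_{2r+1}$ sur $\mathcal{H}_n$ est au plus unidimensionnel et, dans chaque poids, est contenu dans le sous-espace de Hoffman $\mathcal{H}^{2,3}_n$ (par exemple, en poids pair $2k$, il est engendré par $\zeta^{\mathfrak{m}}(2)^{k}$, multiple rationnel de l'élément de Hoffman $\zeta^{\mathfrak{m}}(2,2,\ldots,2)$ par Euler).

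La preuve se déroule alors par récurrence sur le poids $n$. Supposons $\mathcal{H}^{2,3}_k=\mathcal{H}_k$ pour tout $k<n$. Pour $x\in\mathcal{H}_n$, l'hypothèse de récurrence place $D_{<n}(x)$ dans $\bigoplus_r\mathcal{L}_{2r+1}\otimes\mathcal{H}^{2,3}_{n-2r-1}$. Si l'on peut trouver $y\in\mathcal{H}^{2,3}_n$ tel que $D_{<n}(y)=D_{<n}(x)$, alors $x-y$ appartient au noyau de $D_{<n}$, donc à $\mathcal{H}^{2,3}_n$; par suite $x\in\mathcal{H}^{2,3}_n$ et la récurrence se referme.

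L'obstacle principal, et la contribution la plus profonde de Brown, est précisément de produire un tel $y$: il faut montrer que la restriction de $D_{<n}$ à $\mathcal{H}^{2,3}_n$ a la même image que $D_{<n}$ sur $\mathcal{H}_n$ tout entier. En indexant les éléments de Hoffman par des mots binaires en $\{2,3\}$ et en calculant $D_{2r+1}$ sur ceux-ci via la formule de Goncharov, on est conduit à une matrice explicite de périodes motiviques dont il faut établir la non-dégénérescence modulo $\zeta^{\mathfrak{m}}(2)$. Brown réduit cet énoncé de rang à la non-annulation de certains déterminants de \emph{vrais} nombres réels, qu'il contrôle grâce à une remarquable identité close de Zagier exprimant $\zeta(2,\ldots,2,3,2,\ldots,2)$ comme combinaison $\mathbb{Q}$-linéaire de produits $\pi^{2j}\zeta(2k+1)$. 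Cette identité, démontrée via des séries génératrices hypergéométriques, est exactement ce qui permet à la récurrence d'aboutir.
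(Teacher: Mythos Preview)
Ta démarche est correcte et suit de près celle de Brown telle que l'article la résume : relèvement motivique, coaction de Goncharov--Brown et dérivations $D_{2r+1}$, identité de Zagier pour $\zeta(2,\ldots,2,3,2,\ldots,2)$ comme clef de voûte du calcul matriciel. Le texte, lui, emballe l'argument un peu différemment : il énonce d'abord l'\emph{indépendance linéaire} des éléments de Hoffman motiviques (théorème~\ref{thm:brown bis}), puis invoque l'égalité $\dim_{\mathbb{Q}}\mathcal{P}^{\mathrm{mot}}_n = d_n$ (issue de l'isomorphisme $\mathcal{P}^{\mathrm{mot}}\simeq\mathcal{F}$) pour conclure qu'ils forment une base, et applique enfin le morphisme de période.

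Un point à corriger dans ton schéma inductif : tu affirmes que le noyau de $D_{<n}$ sur $\mathcal{H}_n$ est contenu dans $\mathcal{H}^{2,3}_n$ \og par la structure connue de $\mathcal{H}$\fg. C'est clair en poids pair, où le noyau est $\mathbb{Q}\,\zeta^{\mathfrak{m}}(2)^k = \mathbb{Q}\,\zeta^{\mathfrak{m}}(2,\ldots,2)$, mais en poids impair $n$ le noyau est $\mathbb{Q}\,\zeta^{\mathfrak{m}}(n)$, et l'appartenance $\zeta^{\mathfrak{m}}(n)\in\mathcal{H}^{2,3}_n$ est précisément une instance de ce que tu cherches à démontrer --- la structure de $\mathcal{H}$ ne te la donne pas gratuitement. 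L'argument de dimension évite cette circularité : la non-dégénérescence de la matrice de niveau (via Zagier) établit l'indépendance linéaire des $d_n$ éléments de Hoffman en poids $n$, et comme $\dim\mathcal{H}_n = d_n$, ils engendrent. C'est le même calcul que le tien, simplement présenté dans l'ordre qui ne se mord pas la queue.
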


À l'instar du théorème \ref{thm:goncharov terasoma} de Goncharov et Terasoma, ce résultat est prouvé par des techniques sophistiquées issues de la géométrie algé\-brique. Remarquons que le théorème de Brown n'est pas effectif: il ne donne pas de formule, ni même d'algorithme, pour écrire une valeur zêta multiple comme combinaison $\QQ$-linéaire des valeurs zêta multiples de Hoffman. Cependant, les coefficients de telles combinaisons linéaires sont \og petits\fg en pratique et peuvent être découverts expérimentalement. Mieux, la preuve du théorème de Brown permet de découvrir ces coefficients \og un par un\fg en quelque sorte, c'est-à-dire en reconnaissant à chaque étape \emph{un seul} nombre rationnel connu avec une grande précision. Ce théorème est donc effectif \og en pratique\fg et donne un outil pour écrire les valeurs zêta multiples dans la base de Hoffman, ou dans toute autre base (voir \cite{browndecomposition}).

\subsection{La structure de l'algèbre des valeurs zêta multiples}

La structure d'algèbre de $\mathcal{Z}$ est conjecturalement très simple. Intro\-duisons des symboles formels $f_n$ pour $n\geq 3$ impair et l'espace
$$\mathcal{A}=\QQ\langle f_3,f_5,f_7,f_9,\ldots \rangle$$
des polynômes non commutatifs en ces symboles, dont une base est donné par les mots dans l'alphabet $\{f_3,f_5,f_7,f_9,\ldots\}$. On définit sur~$\mathcal{A}$ le \emph{produit de mélange} $\shuffle$ qui consiste comme son nom l'indi\-que à mélanger les mots comme deux paquets de cartes, en gardant l'ordre des lettres dans chaque mot\footnote{En anglais, mélange se dit \emph{shuffle}, d'où le choix de la lettre $\shuffle$ de l'alphabet cyrillique, qui se prononce \emph{sha}.}. Par exemple: $f_3\shuffle (f_5f_7)=f_3f_5f_7 + f_5f_3f_7+f_5f_7f_3$. La formule générale est:
\begin{equation}\label{eq:defi shuffle f}
(f_{i_1}\cdots f_{i_r}) \shuffle (f_{i_{r+1}}\cdots f_{i_{r+s}}) = \sum_{\sigma\in \shuffle(r,s)} f_{i_{\sigma^{-1}(1)}}\cdots f_{i_{\sigma^{-1}(r+s)}},
\end{equation}
où $\shuffle(r,s)$ est l'ensemble des permutations $\sigma\in\mathfrak{S}_{r+s}$ qui sont strictement croissantes sur les $r$ premiers indices et les $s$ derniers indices. Ce~produit fait de l'espace $\mathcal{A}$ une algèbre \emph{commutative} \cite{reutenauerfreelie}. Enfin, on introduit un nouveau symbole formel $f_2$ et on considère l'algèbre
$$\mathcal{F}=\mathcal{A}[f_2]$$
des polynômes en $f_2$ avec coefficients dans l'algèbre de mélange $\mathcal{A}$. Une base de $\mathcal{F}$ est donnée par les monômes $f_2^kf_{i_1}\cdots f_{i_r}$ avec les $i_j\geq 3$ impairs. On introduit un \emph{degré} sur $\mathcal{F}$ en décrétant qu'un symbole $f_n$ a degré $n$ pour $n=2$ ou $n\geq 3$ impair. Cela fait de $\mathcal{F}$ une algèbre graduée.

\begin{conj}\label{conj:structure algebre Z}
L'algèbre $\mathcal{Z}$ est graduée par le poids et on a un isomorphisme d'algèbres graduées:
$$\mathcal{Z} \simeq \mathcal{F}.$$
\end{conj}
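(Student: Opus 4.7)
Le plan est de passer par une version \emph{motivique} des valeurs zêta multiples. On introduit une $\QQ$-algèbre graduée $\mathcal{H}$, l'\emph{algèbre des valeurs zêta multiples motiviques}, engendrée par des symboles $\zeta^{\mathfrak{m}}(n_1,\ldots,n_r)$ construits à partir du groupe fondamental motivique de $\mathbb{P}^1\setminus\{0,1,\infty\}$ (Deligne, Goncharov). Par construction, ces symboles vérifient toutes les relations motiviques entre valeurs zêta multiples et on dispose d'un morphisme d'algèbres graduées, le \emph{morphisme des périodes},
$$\mathrm{per}\colon \mathcal{H}\longrightarrow \mathcal{Z},\quad \zeta^{\mathfrak{m}}(n_1,\ldots,n_r)\longmapsto \zeta(n_1,\ldots,n_r),$$
qui est surjectif de façon essentiellement tautologique. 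La conjecture se décompose alors en deux énoncés: (i) un isomorphisme d'algèbres graduées $\mathcal{H}\simeq \mathcal{F}$ ; (ii) l'injectivité de $\mathrm{per}$.

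Pour le point (i), on travaille dans la catégorie tannakienne des motifs de Tate mixtes sur $\mathrm{Spec}\,\mathbb{Z}$. Grâce aux calculs de Borel sur la $K$-théorie rationnelle de $\mathbb{Z}$, on montre que son groupe de Galois motivique est le produit semi-direct de $\mathbb{G}_m$ par un schéma en groupes pro-unipotent dont l'algèbre de Lie est libre, avec un unique générateur en chaque degré impair ${\geq 3}$. L'algèbre $\mathcal{H}$ s'identifie à l'algèbre affine de ce groupe ; on en déduit (non canoniquement) un isomorphisme d'algèbres graduées $\mathcal{H}\simeq \mathcal{F}$, où les $f_{2k+1}$ correspondent à un choix de générateurs duaux au pro-unipotent et $f_2$ correspond à $\zeta^{\mathfrak{m}}(2)$. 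On obtient au passage $\dim_\QQ\mathcal{H}_n=d_n$, ce qui, combiné à la surjectivité du morphisme des périodes, redonne la borne supérieure du théorème \ref{thm:goncharov terasoma}. La preuve par Brown du théorème \ref{thm:brown} s'effectue d'ailleurs d'abord dans $\mathcal{H}$: les valeurs zêta multiples motiviques de Hoffman sont $\QQ$-linéairement indépendantes dans $\mathcal{H}$, et leur image dans $\mathcal{Z}$ engendre l'espace réel.

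L'obstacle principal, qui paraît aujourd'hui hors d'atteinte, est le point (ii): c'est un cas particulier de la conjecture des périodes de Grothendieck appliquée à la catégorie des motifs de Tate mixtes sur $\mathbb{Z}$, demandant que toute relation $\QQ$-linéaire entre valeurs zêta multiples provienne d'une relation \emph{motivique}. Un tel énoncé implique déjà la conjecture \ref{conj:zeta impairs} sur l'indépendance algébrique des $\zeta(2n+1)$, et en particulier la transcendance de $\zeta(3)$, elle-même ouverte. On ne dispose à ce jour d'aucune méthode permettant de transférer au niveau réel l'indépendance linéaire établie au niveau motivique: tout le saut du motivique vers les périodes réelles réside précisément dans l'injectivité cherchée. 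C'est donc là, et uniquement là, que se concentre toute la difficulté transcendante de la conjecture.
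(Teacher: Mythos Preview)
Your reduction is the one the paper itself gives in §\ref{subsec:MZV mot first}: pass to motivic multiple zeta values, identify the motivic algebra with $\mathcal{F}$, and observe that what remains is the injectivity of the period map --- a special case of Grothendieck's period conjecture, currently out of reach. Since the statement is a conjecture, neither you nor the paper proves it; both reduce it to the same transcendence obstacle.

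One logical point deserves sharpening. You define $\mathcal{H}$ as the subalgebra \emph{generated by} the $\zeta^{\mathfrak{m}}(n_1,\ldots,n_r)$, note that $\mathrm{per}$ is then tautologically surjective, and then assert that $\mathcal{H}$ ``s'identifie à l'algèbre affine'' of the motivic Galois group. But the tannakian description together with Borel's computation only yields this for the \emph{full} period algebra $\mathcal{P}^{\mathrm{mot}}$ of mixed Tate motives over $\mathbb{Z}$. That the motivic MZVs already generate all of $\mathcal{P}^{\mathrm{mot}}$ is exactly the content of Brown's theorem (théorème~\ref{thm:brown bis}), which you relegate to an aside. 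Without it, your step~(i) gives only an injection $\mathcal{H}\hookrightarrow\mathcal{F}$, not an isomorphism. The paper keeps these two ingredients separate: first $\mathcal{P}^{\mathrm{mot}}\simeq\mathcal{F}$ from the tannakian structure and Borel, then $\mathcal{Z}^{\mathrm{mot}}=\mathcal{P}^{\mathrm{mot}}$ from Brown. Your argument is correct once Brown's theorem is placed where it belongs.
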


Un isomorphisme prédit par la conjecture n'a pas de raison d'être unique, mais il est naturel de le normaliser afin que $\zeta(n)$ corresponde au symbole $f_n$. Notons que cette conjecture implique la conjecture~\ref{conj:zagier dim} de Zagier puisque le nombre de monômes de poids $n$ dans $\mathcal{F}$ est exactement $d_n$. En effet, la série génératrice correspondante est
$$\sum_{n\geq 0}\dim_\QQ(\mathcal{F}_n)\, t^n = \frac{1}{1-t^2}\, \frac{1}{1-(t^3+t^5+t^7+t^9+\cdots)} = \frac{1}{1-t^2-t^3},$$
où l'on reconnaît la série génératrice de la suite $(d_n)$.

La conjecture \ref{conj:structure algebre Z} pourrait faire croire à une base \og simple\fg de l'algèbre des valeurs zêta multiples, qui correspondrait à la base des monômes de $\mathcal{F}$ et serait donnée par les
$$\zeta(2)^k\, \zeta(i_1,\ldots,i_r)$$
avec les $i_j\geq 3$ impairs. Mais ces éléments ne sont pas linéairement indépen\-dants, comme le montre la relation suivante en poids $12$:
$$28\,\zeta(3,9) + 150\,\zeta(5,7) + 168 \, \zeta(7,5) = \frac{5197}{691}\zeta(12) = \frac{332\, 608}{875\, 875} \, \zeta(2)^6.$$
Cette relation est la première d'une famille infinie de relations découverte par Gangl-Kaneko-Zagier et indexée par les formes modulaires cuspidales pour le groupe modulaire $\mathrm{SL}_2(\mathbb{Z})$ \cite{ganglkanekozagier}.

\subsection{Valeurs zêta multiples motiviques et théorie de Galois}\label{subsec:MZV mot first}

Un des ingrédients principaux dans les preuves des théorèmes \ref{thm:goncharov terasoma} et~\ref{thm:brown} est la notion de \emph{valeur zêta multiple motivique} que nous décrivons dès maintenant afin d'éclairer les liens entre les conjectures et les résultats dont il a été question jusqu'ici --- pour plus de détails, voir les paragraphes \ref{sec:motifs}, \ref{sec:pi1mot}, \ref{sec:applis pi1mot}. Les valeurs zêta multiples motiviques sont des éléments
$$\zeta^{\mathrm{mot}}(n_1,\ldots,n_r)\;\in\; \mathcal{P}^{\mathrm{mot}}$$
d'une $\mathbb{Q}$-algèbre $\mathcal{P}^{\mathrm{mot}}$ dont les éléments sont appelés \emph{périodes motiviques}. Cette algèbre n'est pas constituée de nombres à proprement parler mais possède un morphisme de $\mathbb{Q}$-algèbres
$$\mathrm{per}:\mathcal{P}^{\mathrm{mot}}\to \mathbb{C},$$
appelé le \emph{morphisme de période}, qui est tel que
$$\mathrm{per}(\zeta^{\mathrm{mot}}(n_1,\ldots,n_r))=\zeta(n_1,\ldots,n_r).$$
Les valeurs zêta multiples motiviques sont donc vues comme un \emph{relè\-vement} des valeurs zêta multiples \og ordinaires\fg. Une conjecture centrale, due à Grothendieck, veut que ces deux versions contiennent exactement la même information:

\begin{conj}[Conjecture des périodes, cas particulier]\label{conj:periodes}
Le morphisme $\mathrm{per}$ est injectif.
\end{conj}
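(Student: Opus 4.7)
Le plan est de ramener l'injectivité de $\mathrm{per}$ à une question d'indépendance linéaire dans $\mathbb{R}$. Notons $\mathcal{Z}^{\mathrm{mot}}\subset \mathcal{P}^{\mathrm{mot}}$ la sous-algèbre engendrée par les $\zeta^{\mathrm{mot}}(n_1,\ldots,n_r)$. Puisque $\mathrm{per}(\zeta^{\mathrm{mot}}(n_1,\ldots,n_r))=\zeta(n_1,\ldots,n_r)$, le morphisme $\mathrm{per}$ envoie $\mathcal{Z}^{\mathrm{mot}}$ surjectivement sur $\mathcal{Z}$. La conjecture (restreinte aux valeurs zêta multiples) revient donc à montrer qu'il n'existe pas de relation $\QQ$-linéaire supplémentaire entre valeurs zêta multiples ordinaires au-delà de celles qui se relèvent en des relations motiviques.

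Premièrement, j'invoquerais les méthodes motiviques à l'œuvre dans les théorèmes \ref{thm:goncharov terasoma} et \ref{thm:brown}, dont la preuve fournit, du côté motivique, l'égalité $\dim_\QQ \mathcal{Z}^{\mathrm{mot}}_n = d_n$ ainsi qu'une base explicite (celle de Hoffman motivique). Comme $\mathrm{per}$ est surjectif sur $\mathcal{Z}$, son injectivité sur $\mathcal{Z}^{\mathrm{mot}}$ équivaut exactement à $\dim_\QQ \mathcal{Z}_n = d_n$, c'est-à-dire à la borne inférieure de la conjecture \ref{conj:zagier dim} de Zagier. Il suffirait donc d'exhiber, en chaque poids $n$, $d_n$ valeurs zêta multiples réelles qui soient $\QQ$-linéairement indépendantes; les valeurs zêta multiples de Hoffman avec $n_i\in\{2,3\}$ en sont les candidats naturels, et une stratégie ambitieuse consisterait à construire simultanément, pour chaque $n$, des approximants diophantiens à la Apéry--Ball--Rivoal capables de certifier ces $d_n$ indépendances.

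Le principal obstacle — en pratique infranchissable à ce jour — est précisément cette dernière étape. Les techniques actuelles de théorie de la transcendance (approximants de Padé, critères d'indépendance linéaire de Nesterenko, séries hypergéométriques multiples) ne permettent pas même d'établir l'irrationalité de $\zeta(5)$, et encore moins de démontrer l'indépendance linéaire d'une famille de taille $d_n$ de valeurs zêta multiples pour $n$ grand. La puissance de l'approche motivique réside justement dans ce qu'elle ramène le problème qualitatif de structure à un calcul de théorie de Galois, calcul qui ne fournit que la borne \emph{supérieure} du côté analytique via $\mathrm{per}$; l'injectivité elle-même demeure l'un des énoncés les plus inaccessibles de l'arithmétique contemporaine, et je ne vois donc pas de stratégie de preuve réaliste, au-delà de ce schéma conditionnel à des progrès spectaculaires en théorie des nombres transcendants.
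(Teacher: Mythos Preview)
L'énoncé en question est une \emph{conjecture}, non un théorème: le texte ne contient aucune démonstration de la conjecture~\ref{conj:periodes}, et la présente explicitement comme ouverte. Il n'y a donc pas de \og preuve du papier\fg à laquelle comparer votre proposition.

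Cela dit, votre analyse est correcte et recoupe exactement la discussion du paragraphe~\ref{subsec:MZV mot first}: une fois établi (via les théorèmes~\ref{thm:goncharov terasoma} et~\ref{thm:brown bis}) que $\mathcal{Z}^{\mathrm{mot}}=\mathcal{P}^{\mathrm{mot}}$ et que $\dim_\QQ\mathcal{Z}^{\mathrm{mot}}_n=d_n$, l'injectivité de $\mathrm{per}$ équivaut à la borne inférieure $\dim_\QQ\mathcal{Z}_n\geq d_n$ de la conjecture~\ref{conj:zagier dim}, et donc à l'indépendance $\QQ$-linéaire des valeurs zêta multiples de Hoffman dans~$\mathbb{R}$. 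Le texte souligne lui aussi que cette indépendance est hors de portée des méthodes actuelles --- elle impliquerait notamment la conjecture~\ref{conj:zeta impairs} sur les valeurs zêta impaires, dont on ne connaît que des cas très partiels (Apéry, Ball--Rivoal, Zudilin). Votre diagnostic final est donc le bon: il n'existe pas de stratégie de preuve réaliste, et la conjecture reste l'un des problèmes ouverts centraux de la théorie.
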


Cette conjecture affirme que toute relation algébrique entre valeurs zêta multiples est aussi satisfaite par les valeurs zêta multiples motiviques: il ne devrait donc pas y avoir de différence entre l'algè\-bre~$\mathcal{Z}$ et la sous-algèbre $\mathcal{Z}^{\mathrm{mot}}\subset \mathcal{P}^{\mathrm{mot}}$ engendrée par les valeurs zêta multiples motiviques. Une grande force des idées motiviques est que contrairement à $\mathcal{Z}$, on sait dire beaucoup de choses sur la structure de $\mathcal{Z}^{\mathrm{mot}}$:
\begin{enumeratea}
\item L'algèbre $\mathcal{P}^{\mathrm{mot}}$ et sa sous-algèbre $\mathcal{Z}^{\mathrm{mot}}$ sont naturellement graduées et $\zeta^{\mathrm{mot}}(n_1,\ldots,n_r)$ est dans la partie homogène de degré $n=n_1+\cdots +n_r$.
\item On a un isomorphisme d'algèbres graduées $\mathcal{P}^{\mathrm{mot}}\simeq \mathcal{F}$, qui peut être choisi de telle sorte à faire correspondre $\zeta^{\mathrm{mot}}(n)$ avec $f_n$ pour $n=2$ ou $n\geq 3$ impair.
\end{enumeratea}
La partie (a) implique que la deuxième partie de la conjecture \ref{conj:zagier dim} de Zagier est connue pour les valeurs zêta multiples \emph{motiviques}. La partie (b) prouve le théorème \ref{thm:goncharov terasoma} de Goncharov et Terasoma grâce aux d'inégalités:
$$\dim_{\mathbb{Q}}(\mathcal{Z}_n)\leq \dim_{\mathbb{Q}}(\mathcal{Z}_n^{\mathrm{mot}}) \leq \dim_{\mathbb{Q}}(\mathcal{P}_n^{\mathrm{mot}}) = \dim_{\mathbb{Q}}(\mathcal{F}_n) = d_n.$$

Inspiré par la conjecture \ref{conj:hoffman} de Hoffman, Brown prouve:

\begin{thm}[Brown \cite{brownMTMZ}]\label{thm:brown bis}
Les valeurs zêta multiples motiviques $\zeta^{\mathrm{mot}}(n_1,\ldots,n_r)$ avec $n_i\in \{2,3\}$ sont linéairement indépendantes dans $\mathcal{P}^{\mathrm{mot}}\!$.
\end{thm}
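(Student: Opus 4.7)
L'idée directrice est d'exploiter la structure de coaction motivique sur $\mathcal{P}^{\mathrm{mot}}$ provenant de l'action du groupe de Galois motivique. En quotientant par l'idéal engendré par $\zeta^{\mathrm{mot}}(2)$, la propriété (b) permet d'identifier $\mathcal{Z}^{\mathrm{mot}}/(\zeta^{\mathrm{mot}}(2))$ à l'algèbre commutative de battage $\mathcal{A}$ munie de sa coaction naturelle. En projetant la partie infinitésimale du coproduit sur chaque composante homogène de poids impair $2r+1$, qui est de dimension $1$ et engendrée par la classe de $\zeta^{\mathrm{mot}}(2r+1)$, on construit pour tout $r\geq 1$ une dérivation graduée
$$\partial_{2r+1}:\mathcal{Z}^{\mathrm{mot}}/(\zeta^{\mathrm{mot}}(2))\longrightarrow \mathcal{Z}^{\mathrm{mot}}/(\zeta^{\mathrm{mot}}(2))$$
qui décroît le poids de $2r+1$. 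La propriété fondamentale, conséquence de (b), est que la famille diagonale $\partial=\bigoplus_{r\geq 1} \partial_{2r+1}$ est injective sur la composante de poids strictement positif.

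La deuxième étape est de calculer $\partial_{2r+1}$ sur les valeurs zêta multiples motiviques de Hoffman. En représentant $\zeta^{\mathrm{mot}}(n_1,\ldots,n_s)$ avec $n_i\in\{2,3\}$ comme une intégrale itérée motivique $I^{\mathrm{mot}}(0;\varepsilon_1,\ldots,\varepsilon_N;1)$ sur un mot binaire explicite, la formule combinatoire de coaction de Goncharov exprime $\partial_{2r+1}$ comme une somme indexée par les sous-mots consécutifs de longueur $2r+1$. On introduit alors, sur le $\mathbb{Q}$-espace vectoriel engendré par les valeurs zêta multiples motiviques de Hoffman, la filtration par le \emph{niveau} $\ell$ égal au nombre de $3$ dans $(n_1,\ldots,n_s)$. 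L'analyse des sous-mots contributifs montre que, modulo niveau strictement inférieur, l'opérateur $\partial_{2r+1}$ envoie une telle valeur de niveau $\ell$ sur une somme indexée par les positions des $3$, le calcul se ramenant aux évaluations élémentaires des motifs $\zeta^{\mathrm{mot}}(2,\ldots,2,3,2,\ldots,2)$.

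Le point clef, et principal obstacle du raisonnement, sera d'évaluer explicitement ces contributions. On utilise pour cela la formule de Zagier exprimant $\zeta(2,\ldots,2,3,2,\ldots,2)$, et sa version motivique, comme une combinaison linéaire de $\zeta(2k+1)\,\zeta(2)^j$ dont les coefficients mettent en jeu des sommes de coefficients binomiaux. La matrice qui en résulte, gouvernant l'action sur le gradué associé à la filtration par le niveau, doit être prouvée inversible ; Brown y parvient par un argument arithmétique $2$-adique délicat minorant la valuation $2$-adique de certains sous-déterminants. Une fois cette non-dégénérescence établie, on conclut par récurrence sur le poids $N$ : une combinaison linéaire nulle de valeurs zêta multiples motiviques de Hoffman de poids $N$ est envoyée par $\partial$ dans $\bigoplus_{r\geq 1} \mathbb{Q}\,\zeta^{\mathrm{mot}}(2r+1)\otimes \mathcal{Z}^{\mathrm{mot}}_{N-2r-1}$, où l'hypothèse de récurrence trivialise la seconde composante ; l'inversibilité du bloc dominant pour la filtration par le niveau force alors l'annulation de tous les coefficients initiaux.
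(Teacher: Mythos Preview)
Your proposal is correct and follows essentially the same route that the paper sketches (and that Brown's original argument takes): the motivic coaction / Galois action, Goncharov's combinatorial formula for it, the level filtration by the number of $3$'s, Zagier's evaluation of $\zeta(2,\ldots,2,3,2,\ldots,2)$, and the $2$-adic invertibility of the resulting matrix. The survey paper itself only records the high-level strategy (the Galois action and the recognition principle of proposition~\ref{prop:reconnaissance zeta simples}) and says that \og cette stratégie est au c\oe ur de la preuve\fg; your outline supplies exactly the missing ingredients from~\cite{brownMTMZ}.
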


Un argument de dimension montre alors que les valeurs zêta multiples motiviques de Hoffman forment une base de $\mathcal{Z}^{\mathrm{mot}}=\mathcal{P}^{\mathrm{mot}}$. En appliquant le morphisme de période on obtient donc le théorème~\ref{thm:brown}. De plus, l'algèbre graduée $\mathcal{Z}^{\mathrm{mot}}$ est isomorphe à l'algèbre abstraite~$\mathcal{F}$ et il résulte que la conjecture \ref{conj:zagier dim} de Zagier, la conjecture \ref{conj:hoffman} de Hoffman et la conjecture \ref{conj:structure algebre Z} sur la structure de l'algèbre $\mathcal{Z}$ sont toutes les trois satisfaites au niveau des valeurs zêta multiples motiviques. Elles sont donc logiquement équivalentes à la conjecture \ref{conj:periodes} des périodes, qui affirme que le morphisme des périodes induit un isomorphisme
$$\mathcal{Z}^{\mathrm{mot}} \stackrel{?}{\simeq} \mathcal{Z}.$$
Notons au passage que la conjecture des périodes implique la conjecture \ref{conj:zeta impairs} sur les valeurs zêta impaires puisque celle-ci est vraie pour les valeurs zêta motiviques (en effet, dans l'algèbre $\mathcal{F}$, les éléments $f_2$, $f_3$, $f_5$, $f_7$, etc. sont algébriquement indépendants). 

La preuve du théorème \ref{thm:brown bis} utilise une structure fine de l'algèbre des valeurs zêta multiples motiviques:

\begin{enumeratea}\setcounter{enumi}{2}
\item Il existe une action très riche d'un groupe, le \emph{groupe de Galois motivique}, sur l'algèbre $\mathcal{P}^\mathrm{mot}$, dont l'effet sur les valeurs zêta multiples motiviques $\zeta^{\mathrm{mot}}(n_1,\ldots,n_r)$ peut être calculé explicitement.
\end{enumeratea}
Cet ingrédient doit être vu comme une \emph{théorie de Galois} des valeurs zêta multiples motiviques, qui est analogue à la théorie de Galois classique des nombres algébriques, où les groupes de Galois finis sont remplacés par des groupes de matrices sur $\mathbb{Q}$. La différence est que le groupe de Galois motivique n'agit pas sur l'algèbre des valeurs zêta multiples $\mathcal{Z}$ mais sur sa variante motivique $\mathcal{Z}^{\mathrm{mot}}$ --- qui est la même chose si on croit à la conjecture \ref{conj:periodes}. Il s'agit d'une instance de l'idée générale d'une théorie de Galois des périodes, développée par André \cite{andregalois, andrebourbaki} d'après la philosophie des motifs de Grothendieck.

\section{Relations de double mélange}\label{sec:double melange}

Comme promis, nous décrivons ici une manière systématique et combinatoirement très riche de produire des relations linéaires (conjecturalement \emph{toutes} les relations linéaires) entre valeurs zêta multiples.

\subsection{Produit de quasi-mélange}

Commençons par expliciter la structure combinatoire derrière le calcul du produit \eqref{eq:zeta m zeta n} et la proposition \ref{prop: produit MZVs}. On considère sur l'espace
$$\mathcal{Y}=\QQ\langle y_1,y_2,y_3,\ldots\rangle$$ des polynômes non commutatifs en une infinité de variables formelles~$y_i$ un produit $*$, dit produit de \emph{quasi-mélange}, qui satisfait à $\alpha*1=1*\alpha=\alpha$ pour tout monôme $\alpha$, et la formule de récurrence (sur le degré des monômes):
$$(\alpha\,y_m)*(\beta\,y_n) = ((\alpha\,y_m)*\beta)\,y_n + (\alpha*(\beta\,y_n))\,y_m + (\alpha*\beta)\,y_{m+n},$$
où $\alpha$ et $\beta$ sont des monômes. Par exemple, on vérifie qu'on a la formule suivante, censée rappeler \eqref{eq:zeta m zeta n}:
$$y_m*y_n=y_my_n+y_ny_m+y_{m+n}.$$
Le produit $*$ consiste donc à mélanger les mots comme dans le produit de mélange de la section précédente, puis à fusionner certaines paires de lettres consécutives en ajoutant leurs indices\footnote{Dans la littérature anglo-saxonne on trouve parfois l'expression \emph{stuffle} pour le produit de quasi-mélange, qui est un mot-valise formé sur \emph{shuffle} (mélange) et \emph{stuff} (choses): le quasi-mélange, c'est \og le mélange, plus des choses\fg.}. Il fait de $\mathcal{Y}$ une $\mathbb{Q}$-algèbre commutative, appelée \emph{algèbre de quasi-mélange}, introduite par Hoffman \cite{hoffmanquasishuffle}.

Associons un monôme dans $\mathcal{Y}$ à un multi-indice par la correspondance:
$$\underline{n}=(n_1,\ldots,n_r) \quad\longleftrightarrow\quad Y_{\underline{n}}=y_{n_1}\cdots y_{n_r}.$$
Appelons $\underline{n}$ ou $Y_{\underline{n}}$ \emph{convergent} si $r=0$ ou $n_r\geq 2$. Les monômes convergents forment une base d'une sous-algèbre $\mathcal{Y}_0\subset\mathcal{Y}$. Écrivons les coefficients du produit de quasi-mélange dans cette base sous la forme:
$$Y_{\underline{m}}*Y_{\underline{n}}=\sum_{\underline{p}} \,v^{\underline{p}}_{\underline{m},\underline{n}}\,Y_{\underline{p}} \quad\quad (v^{\underline{p}}_{\underline{m},\underline{n}}\in\mathbb{N}).$$

\begin{prop}\label{prop:stuffle}
On a la formule de produit de quasi-mélange des valeurs zêta multiples, pour $\underline{m}$ et $\underline{n}$ convergents:
$$\zeta(\underline{m})\zeta(\underline{n}) = \sum_{\underline{p}}\,v^{\underline{p}}_{\underline{m},\underline{n}}\,\zeta(\underline{p}).$$
\end{prop}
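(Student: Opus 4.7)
Mon approche consiste à généraliser le découpage du domaine de sommation effectué en \eqref{eq:decoupage quasi shuffle}. Les hypothèses $m_r\geq 2$ et $n_s\geq 2$ assurent la convergence absolue des deux séries, si bien que le produit s'écrit comme somme double absolument convergente:
$$\zeta(\underline{m})\,\zeta(\underline{n}) = \sum_{\substack{1\leq k_1<\cdots <k_r \\ 1\leq l_1<\cdots <l_s}} \frac{1}{k_1^{m_1}\cdots k_r^{m_r}\,l_1^{n_1}\cdots l_s^{n_s}}\cdot$$
À chaque paire de suites strictement croissantes on associe la suite $N_1<\cdots <N_t$ (avec $\max(r,s)\leq t\leq r+s$) obtenue en réunissant et ordonnant l'ensemble $\{k_1,\ldots,k_r\}\cup\{l_1,\ldots,l_s\}$, ainsi qu'un étiquetage: pour chaque $a\in\{1,\ldots,t\}$, on retient si $N_a$ est un $k_i$ seul (exposant $m_i$), un $l_j$ seul (exposant $n_j$), ou simultanément $k_i=l_j$ (exposant $m_i+n_j$). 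Cet étiquetage respecte nécessairement l'ordre des $m_i$ et celui des $n_j$, et détermine un multi-indice $\underline{p}=(p_1,\ldots,p_t)$ d'exposants.

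L'étape centrale consiste à vérifier que la combinatoire de ces étiquetages coïncide avec celle du produit $Y_{\underline{m}}*Y_{\underline{n}}$ dans l'algèbre $\mathcal{Y}$. Je procéderais par récurrence sur $r+s$, le cas initial $r=0$ ou $s=0$ étant trivial puisque $\zeta(\,)=1$. Pour le pas d'induction, on décompose la somme selon le statut de la position maximale $N_t$ --- uniquement un $k_r$, uniquement un $l_s$, ou simultanément $k_r=l_s$ --- et l'on compare aux trois termes de la formule récursive définissant le produit $*$ (ces trois cas correspondant respectivement à placer en dernière lettre $y_{m_r}$, $y_{n_s}$, ou $y_{m_r+n_s}$). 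Le coefficient $v^{\underline{p}}_{\underline{m},\underline{n}}$ de $Y_{\underline{p}}$ dans $Y_{\underline{m}}*Y_{\underline{n}}$ s'identifie alors au nombre d'étiquetages conduisant au multi-indice $\underline{p}$.

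Il reste à observer que, pour chaque étiquetage fixé de type $\underline{p}$, la somme sur les configurations $N_1<\cdots <N_t$ vaut $\zeta(\underline{p})$, convergente puisque $p_t\in\{m_r, n_s, m_r+n_s\}$ et que $m_r,n_s\geq 2$. En regroupant par $\underline{p}$, on obtient la formule annoncée. Le point le plus délicat à rédiger est la mise en bijection explicite entre étiquetages d'interclassement et monômes apparaissant dans $Y_{\underline{m}}*Y_{\underline{n}}$: il s'agit d'un exercice combinatoire sans difficulté conceptuelle, mais qui demande de dérouler l'induction sur $r+s$ en parallèle à la définition récursive du produit de quasi-mélange.
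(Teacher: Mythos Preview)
Your proposal is correct and follows exactly the same approach as the paper: decomposing the product summation domain according to the relative order of the indices, and identifying this decomposition with the quasi-shuffle product. The paper's proof is a one-sentence sketch of this idea, whereas you have supplied the details (the induction on $r+s$ mirroring the recursive definition of $*$, and the verification that each $\underline{p}$ is convergent), but the underlying argument is identical.
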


\begin{proof}
La preuve de cette formule consiste à remarquer que le découpage du domaine de sommation dans un produit de séries multiples du type \eqref{eq:defiMZV} est précisément modélisé par le produit de quasi-mélange, comme dans le cas particulier \eqref{eq:decoupage quasi shuffle}.
\end{proof}

Dit autrement, l'application $Y_{\underline{n}}\mto \zeta(\underline{n})$ définit un morphisme d'algè\-bres de $(\mathcal{Y}_0,*)$ vers $\mathbb{R}$. On peut montrer qu'il existe une unique manière de l'étendre en un morphisme sur l'algèbre $(\mathcal{Y},*)$ tout entière qui s'annule sur $y_1$ (cela revient à poser $\zeta(1)=0$). Ce processus s'appelle la \emph{régularisation de quasi-mélange} des valeurs zêta multiples.

\subsection{Une formule intégrale pour les valeurs zêta multiples}

Si les valeurs zêta multiples sont définies comme des sommes de \emph{séries} multiples \eqref{eq:defiMZV}, il est avantageux de les interpréter comme des valeurs d'\emph{intégrales} multiples. Pour cela introduisons deux formes différentielles:
$$\omega_0(x) = \frac{dx}{x} \quad \mbox{ et }\quad \omega_1(x)=\frac{dx}{1-x}\cdot$$
À un mot $(a_1,\ldots,a_n)\in\{0,1\}^n$ on associe l'\emph{intégrale itérée}:
\begin{equation}\label{eq:defi int it 0 1}
\mathrm{I}(a_1,\ldots,a_n) = \int_{\Delta^n}\omega_{a_1}(t_1)\cdots \omega_{a_n}(t_n),
\end{equation}
où $\Delta^n=\{0\leq t_1\leq \cdots\leq t_n\leq 1\}$ est le simplexe standard de dimension $n$. On remarque que l'intégrale \eqref{eq:defi int it 0 1} converge exactement quand $a_1\neq 0$ et $a_n\neq 1$. La proposition suivante est attribuée à Kontsevich, où l'on utilise la notation $\{0\}^r$ pour une suite de $r$ zéros consécutifs.

\begin{prop}\label{prop:MZVs it int}
On a
$$\zeta(n_1,\ldots,n_r) = \mathrm{I}(1,\{0\}^{n_1-1},\ldots,1,\{0\}^{n_r-1}).$$
\end{prop}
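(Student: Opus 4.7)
Le plan est de calculer explicitement l'intégrale itérée du membre de droite en intégrant les variables $t_1, t_2, \ldots, t_n$ successivement, de la plus interne à la plus externe (par Fubini), en utilisant le développement en série géométrique
$$\omega_1(t) = \frac{dt}{1-t} = \sum_{k \geq 1} t^{k-1}\,dt,$$
puis en identifiant le résultat avec la série \eqref{eq:defiMZV}. Posons $p_0 = 0$ et $p_i = n_1 + \cdots + n_i$ pour $i \geq 1$: le $i$-ème bloc du mot, de longueur $n_i$, occupe les positions $p_{i-1}+1, \ldots, p_i$ et est constitué d'un $\omega_1$ suivi de $n_i - 1$ copies de $\omega_0$.

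Le c\oe{}ur de la preuve est une récurrence sur $i \in \{0, 1, \ldots, r\}$ établissant que l'intégrale itérée des $p_i$ premières formes, vue comme fonction d'une borne supérieure $t \in [0, 1]$, est donnée par
$$F_i(t) = \sum_{1 \leq k_1 < k_2 < \cdots < k_i} \frac{t^{k_i}}{k_1^{n_1} k_2^{n_2} \cdots k_i^{n_i}}$$
(avec la convention $F_0(t) = 1$). Pour l'étape inductive, on multiplie $F_{i-1}(t)$ par les formes du bloc $i$ et on intègre les variables de ce bloc dans l'ordre, chacune jusqu'à la variable suivante. L'intégration contre $\omega_1$, couplée au développement en série géométrique, introduit un nouvel indice $m \geq 1$; le changement de variable $k_i := k_{i-1} + m$ fournit l'inégalité stricte $k_i > k_{i-1}$ et produit un facteur $1/k_i$ après intégration de $0$ à la variable suivante. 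Chacune des $n_i - 1$ intégrations suivantes contre $\omega_0$ incrémente d'une unité l'exposant de $k_i$ au dénominateur, de sorte qu'à l'issue du bloc $i$ l'expression obtenue est exactement $F_i$.

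L'évaluation en $i = r$ et $t = 1$ fournit alors précisément la série \eqref{eq:defiMZV} définissant $\zeta(n_1, \ldots, n_r)$, ce qui démontrera la proposition.

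Le point technique principal sera la justification des interversions somme--intégrale à chaque étape: la condition de convergence $n_r \geq 2$ garantit la convergence absolue de la série finale, tandis que les étapes intermédiaires font intervenir des séries en $t \in [0, 1)$ contrôlables par convergence dominée. Le détail combinatoire le plus subtil est le réindexage $k_i = k_{i-1} + m$ à chaque étape inductive, qui est précisément ce qui encode les inégalités strictes $k_1 < \cdots < k_r$ caractéristiques des valeurs zêta multiples, et qui transforme les exposants $t^{k_{i-1}+m-1}$ apparaissant naturellement en les puissances $t^{k_i - 1}$ nécessaires à la récurrence.
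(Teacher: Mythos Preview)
Ta proposition est correcte et suit essentiellement la même approche que le papier: développement en série géométrique de $\omega_1$, échange somme--intégrale, et calcul des intégrales \og dans l'ordre croissant des indices\fg. Le papier se contente d'énoncer cette stratégie et de traiter l'exemple de $\zeta(2)$; tu en donnes la formalisation naturelle via la récurrence sur les blocs et les polylogarithmes partiels $F_i(t)$, avec en prime une discussion honnête des questions de convergence que le papier passe sous silence.
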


\begin{proof}
On fait apparaître des séries géométriques dans l'intégrale itérée par le développement:
$$\omega_1(t)=\sum_{n\geq 0}t^n\, dt,$$
puis on échange les sommes et les intégrales, et on calcule les intégrales dans l'ordre croissant des indices. Par exemple:
\begin{align*}
\mathrm{I}(1,0) & = \iint_{0\leq x\leq y\leq 1} \frac{dx}{1-x}\frac{dy}{y}  = \sum_{k\geq 0}\int_0^1\left(\int_0^y x^k dx\right)\frac{dy}{y} \\
& = \sum_{k\geq 0} \frac{1}{k+1} \int_0^1 y^{k+1}\frac{dy}{y}  = \sum_{k\geq 0}\frac{1}{(k+1)^2} = \sum_{k=1}^\infty\frac{1}{k^2} = \zeta(2).\qedhere
\end{align*}
\end{proof}

Cette proposition montre que les valeurs zêta multiples sont des périodes au sens de Kontsevich-Zagier:

\begin{defi}[Kontsevich-Zagier \cite{kontsevichzagier}]\label{defi:periodeKZ}
Une \emph{période} (effective) est un nombre complexe dont les parties réelle et imaginaire peuvent s'écrire comme des intégrales absolument convergentes de fractions rationnelles sur $\mathbb{Q}$ sur des domaines de $\mathbb{R}^n$ définis par des inégalités entre polynômes sur $\mathbb{Q}$.
\end{defi}

Nous renvoyons le lecteur au texte de Javier Fresán dans ce volume pour une discussion de la notion de période.

Les valeurs zêta multiples ont d'autres représentations intégrales utiles: par exemple, le changement de variables $t_i=x_i\cdots x_n$, pour $i=1,\ldots,n$, permet d'écrire une intégrale itérée \eqref{eq:defi int it 0 1} comme une intégrale sur l'hypercube $[0,1]^n$, sur le modèle de:
$$\zeta(2) = \iint_{[0,1]^2}\frac{dx\, dy}{1-xy}\cdot$$

\subsection{Produit de mélange}\label{subsec:shuffle}

Les intégrales itérées \eqref{eq:defi int it 0 1} ont une structure multiplicative remarquable. Nous la modélisons de manière parallèle au produit de quasi-mélange en introduisant l'espace
$$\mathcal{X}=\QQ\langle x_0,x_1\rangle$$
des polynômes non commutatifs en deux variables $x_0,x_1$. Le \emph{produit de mélange} $\shuffle$ sur $\mathcal{X}$ a déjà été introduit plus haut dans un autre contexte \eqref{eq:defi shuffle f}, et fait de $\mathcal{X}$ une $\QQ$-algèbre commutative. Par exemple, on a:\vspace*{-3pt}\enlargethispage{.5\baselineskip}%
\begin{equation}\label{eq:example shuffle x0 x1}
x_1x_0\shuffle x_1x_0 = 2 \, x_1x_0x_1x_0 + 4\, x_1x_1x_0x_0.
\end{equation}
Inspirés par la proposition \ref{prop:MZVs it int}, associons un monôme dans $\mathcal{X}$ à un multi-indice par la correspondance:
$$\underline{n}=(n_1,\ldots,n_r) \quad\longleftrightarrow\quad X_{\underline{n}}=x_1x_0^{n_1-1}\cdots x_1x_0^{n_r-1}.$$
Les monômes $X_{\underline{n}}$ pour $\underline{n}$ convergent forment une base d'une sous-algèbre $\mathcal{X}_0\subset \mathcal{X}$. Écrivons les coefficients du produit de mélange dans cette base sous la forme:
$$X_{\underline{m}}\shuffle X_{\underline{n}} = \sum_{\underline{p}} \,u_{\underline{m},\underline{n}}^{\underline{p}}\, X_{\underline{p}},\vspace*{-3pt}$$
où les coefficients $u_{\underline{m},\underline{n}}^{\underline{p}}$ sont des entiers naturels.

\begin{prop}\label{prop:shuffle}
On a la formule de produit de mélange des valeurs zêta multiples, pour $\underline{m}$ et $\underline{n}$ convergents:
$$\zeta(\underline{m})\zeta(\underline{n}) = \sum_{\underline{p}}\,u^{\underline{p}}_{\underline{m},\underline{n}}\,\zeta(\underline{p}).$$
\end{prop}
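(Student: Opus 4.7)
L'idée centrale est de combiner la représentation intégrale des valeurs zêta multiples donnée par la proposition \ref{prop:MZVs it int} avec une décomposition géométrique classique d'un produit de simplexes standards en simplexes indexés par des mélanges. La proposition \ref{prop:shuffle} apparaîtra comme la traduction sur les intégrales itérées de ce découpage.

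Je commencerais par écrire $\zeta(\underline{m}) = \mathrm{I}(a_1,\ldots,a_p)$ et $\zeta(\underline{n}) = \mathrm{I}(b_1,\ldots,b_q)$, où les mots $(a_i)$ et $(b_j)$ en l'alphabet $\{0,1\}$ sont associés aux monômes $X_{\underline{m}}$ et $X_{\underline{n}}$ via l'identification $x_0\leftrightarrow 0$, $x_1\leftrightarrow 1$. Par Fubini, le produit $\zeta(\underline{m})\zeta(\underline{n})$ s'écrit alors comme une intégrale sur $\Delta^p\times\Delta^q$ du produit de formes $\omega_{a_i}(t_i)\,\omega_{b_j}(s_j)$, où $(t_1,\ldots,t_p)$ et $(s_1,\ldots,s_q)$ désignent respectivement les variables d'intégration des deux simplexes.

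L'étape cruciale est la décomposition, à un ensemble de mesure nulle près, de $\Delta^p\times\Delta^q\subset[0,1]^{p+q}$ comme union disjointe, indexée par les mélanges $\sigma\in\shuffle(p,q)$, des sous-simplexes $\Delta^{p+q}_\sigma$ où l'on impose un ordre total sur les $p+q$ variables $(t_1,\ldots,t_p,s_1,\ldots,s_q)$ compatible avec les ordres partiels $t_1\leq\cdots\leq t_p$ et $s_1\leq\cdots\leq s_q$ et avec l'entrelacement prescrit par $\sigma$. Sur chaque $\Delta^{p+q}_\sigma$, en renommant les variables $u_1\leq\cdots\leq u_{p+q}$ suivant l'ordre induit, on identifie $\Delta^{p+q}_\sigma$ au simplexe standard $\Delta^{p+q}$ et l'intégrande se réorganise en le produit des formes $\omega_{c_1}(u_1)\cdots \omega_{c_{p+q}}(u_{p+q})$, où $(c_1,\ldots,c_{p+q})$ est le mot obtenu en mélangeant $(a_1,\ldots,a_p)$ et $(b_1,\ldots,b_q)$ selon $\sigma$ --- exactement au sens de la formule \eqref{eq:defi shuffle f}. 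L'intégrale sur $\Delta^{p+q}_\sigma$ est donc, par la définition \eqref{eq:defi int it 0 1}, une intégrale itérée $\mathrm{I}(c_1,\ldots,c_{p+q})$. Sommer sur $\sigma$ et réinterpréter chaque intégrale itérée comme une valeur zêta multiple via la proposition \ref{prop:MZVs it int} donne la formule annoncée.

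Le point délicat sera de justifier proprement la décomposition: il faut vérifier que les faces où au moins deux variables coïncident sont de mesure nulle (ce qui est immédiat, et compatible avec la convergence des intégrales itérées, les singularités des $\omega_0,\omega_1$ n'étant localisées qu'en $0$ et $1$), et que, sur chaque morceau, le produit des formes se regroupe correctement, après renommage, en l'intégrande correspondant au mot mélangé. Enfin, pour garantir que tous les $\underline{p}$ apparaissant au second membre sont convergents (c'est-à-dire que $u^{\underline{p}}_{\underline{m},\underline{n}}$ s'interprète bien dans $\mathcal{X}_0$), je noterais que les mots $(a_i)$ et $(b_j)$ associés à des multi-indices convergents commencent par $1$ et finissent par $0$, propriété manifestement préservée par tout mélange.
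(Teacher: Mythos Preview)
Your proposal is correct and follows essentially the same route as the paper's own proof: represent each factor as an iterated integral via Proposition~\ref{prop:MZVs it int}, apply Fubini to write the product as an integral over $\Delta^p\times\Delta^q$, triangulate this product of simplices by the simplices indexed by $\shuffle(p,q)$, and recognize each piece as the iterated integral attached to the corresponding shuffled word. Your additional remark that convergence is preserved under shuffle (first letter $1$, last letter $0$) is a welcome detail that the paper leaves implicit.
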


\begin{proof}
On écrit les valeurs zêta multiples comme des intégrales itérées (proposition \ref{prop:MZVs it int}) et on utilise le théorème de Fubini pour calculer un produit de telles intégrales:
$$\mathrm{I}(a_1,\ldots,a_r)\,\mathrm{I}(a_{r+1},\ldots,a_{r+s})=\int_{\Delta^r\times\Delta^s}\omega_{a_1}(t_1)\cdots \omega_{a_{r+s}}(t_{r+s}).$$
Le produit de simplexes $\Delta^r\times\Delta^s$ est le polytope de $\mathbb{R}^{r+s}$ défini par les inégalités $0\leq t_1\leq\cdots \leq t_r\leq 1$ et $0\leq t_{r+1}\leq\cdots \leq t_{r+s}\leq 1$. Il~peut donc être triangulé par des simplexes
$$0\leq t_{\sigma(1)}\leq \cdots\leq t_{\sigma(r+s)}\leq 1,$$
où $\sigma\in \mathfrak{S}_{r+s}$ est une permutation qui est strictement croissante sur les $r$ premiers indices et sur les $s$ derniers indices, c'est-à-dire un élément de $\shuffle(r,s)$. Comme ces simplexes s'intersectent le long de sous-ensembles de mesure nulle, l'intégrale se décompose en
$$\mathrm{I}(a_1,\ldots,a_r)\,\mathrm{I}(a_{r+1},\ldots,a_{r+s}) =\sum_{\sigma\in \shuffle(r,s)}\mathrm{I}(a_{\sigma^{-1}(1)},\ldots,a_{\sigma^{-1}(r+s)}),$$
ce qui correspond bien au produit shuffle des monômes $X_{\underline{n}}$.
\end{proof}

Dit autrement, l'application $X_{\underline{n}}\mto \zeta(\underline{n})$ définit un morphisme d'algèbres de $(\mathcal{X}_0,\shuffle)$ vers $\mathbb{R}$. On peut montrer qu'il existe une unique manière de l'étendre en un morphisme sur l'algèbre $(\mathcal{X},\shuffle)$ tout entière qui s'annule en $x_0$ et $x_1$ (cela revient à poser $\zeta(1)=0$). Ce~processus s'appelle la \emph{régularisation de mélange} des valeurs zêta multiples.

\subsection{Relations de double mélange}

La comparaison des deux manières de calculer le produit de deux valeurs zêta multiples (propositions \ref{prop:stuffle} et \ref{prop:shuffle}) donne lieu à des relations linéaires:

\begin{prop}
Si $\underline{m}$ et $\underline{n}$ sont des multi-indices convergents on a l'égalité:
\begin{equation}\label{eq:double shuffle relation}
\sum_{\underline{p}}\,(u^{\underline{p}}_{\underline{m},\underline{n}}-\,v^{\underline{p}}_{\underline{m},\underline{n}})\,\zeta(\underline{p}) = 0.
\end{equation}
\end{prop}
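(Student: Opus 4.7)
La preuve est immédiate à partir des deux expressions du produit $\zeta(\underline{m})\zeta(\underline{n})$ déjà établies. Le plan consiste simplement à confronter les propositions \ref{prop:stuffle} et \ref{prop:shuffle}: chacune donne une écriture du même nombre réel $\zeta(\underline{m})\zeta(\underline{n})$ comme combinaison linéaire à coefficients entiers de valeurs zêta multiples de poids $|\underline{m}|+|\underline{n}|$, via deux structures combinatoires distinctes (quasi-mélange sur $\mathcal{Y}$, mélange sur $\mathcal{X}$).

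L'idée est donc d'écrire l'égalité
$$\sum_{\underline{p}}\,u^{\underline{p}}_{\underline{m},\underline{n}}\,\zeta(\underline{p}) \;=\; \zeta(\underline{m})\zeta(\underline{n}) \;=\; \sum_{\underline{p}}\,v^{\underline{p}}_{\underline{m},\underline{n}}\,\zeta(\underline{p}),$$
puis de soustraire les deux membres extrêmes pour obtenir la relation \eqref{eq:double shuffle relation}. Aucun argument supplémentaire n'est requis, car les deux propositions invoquées portent précisément sur des multi-indices convergents $\underline{m}$ et $\underline{n}$, ce qui est l'hypothèse de la proposition à démontrer ; les coefficients $u^{\underline{p}}_{\underline{m},\underline{n}}$ et $v^{\underline{p}}_{\underline{m},\underline{n}}$ étant à valeurs entières, leur différence fournit bien une combinaison $\mathbb{Z}$-linéaire.

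Il n'y a pas à proprement parler d'obstacle technique dans cette étape: toute la substance mathématique a déjà été consommée dans les démonstrations précédentes (calcul combinatoire des séries doubles d'un côté, théorème de Fubini et triangulation de $\Delta^r\times\Delta^s$ de l'autre). Le seul point à vérifier est la compatibilité des indexations: les deux sommes portent bien sur l'ensemble des multi-indices convergents $\underline{p}$ de poids $|\underline{m}|+|\underline{n}|$, et les bases correspondantes de $\mathcal{Y}_0$ et $\mathcal{X}_0$ sont mises en bijection par la correspondance $Y_{\underline{p}} \leftrightarrow X_{\underline{p}}$, ce qui autorise l'identification terme à terme et justifie la soustraction membre à membre.
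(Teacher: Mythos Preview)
Your proof is correct and follows exactly the approach intended by the paper, which in fact does not even spell out a proof: the proposition is presented as an immediate consequence of comparing the two expressions for $\zeta(\underline{m})\zeta(\underline{n})$ from propositions~\ref{prop:stuffle} and~\ref{prop:shuffle}.
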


Ces relations, dites \emph{relations de double mélange}, sont en général non triviales ; par exemple, pour $\underline{m}=\underline{n}=(2)$, l'égalité entre le produit de mélange \eqref{eq:example shuffle x0 x1} et le produit de quasi-mélange \eqref{eq:zeta m zeta n} donne:
$$4\, \zeta(1,3)=\zeta(4).$$

Si $\underline{m}=(1)$ et $\underline{n}$ est un multi-indice convergent, le lecteur remarquera que le membre de gauche de \eqref{eq:double shuffle relation} ne fait intervenir que des multi-indices convergents (les deux occurrences de la valeur zêta multiple divergente $\zeta(\underline{n},1)$ se compensent), et on peut montrer que la relation correspondante est bien satisfaite. Dans le cas $\underline{n}=(2)$ on retrouve l'égalité $\zeta(1,2)=\zeta(3)$. (Celle-ci peut aussi se déduire de la représentation de $\zeta(3)$ comme intégrale itérée en dimension $3$ et du changement de variable $t_i\leftrightarrow 1-t_i$ dans l'intégrale.) Ces relations de double mélange qui font intervenir des multi-indices potentiellement divergents s'appellent \emph{relations de double mélange étendu} et ont été introduites et étudiées par Ihara-Kaneko-Zagier \cite{iharakanekozagier}. On conjecture qu'elles contiennent toute l'information sur les relations linéaires entre valeurs zêta multiples:

\begin{conj}\label{conj:double shuffle}
Les relations de double mélange étendu engendrent toutes les relations $\QQ$-linéaires entre valeurs zêta multiples.
\end{conj}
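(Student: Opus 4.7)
Soit $\mathfrak{ds}_n$ le $\QQ$-espace vectoriel engendré par les symboles formels $\zeta^{\mathrm{f}}(\underline{p})$, où $\underline{p}$ parcourt les multi-indices convergents de poids $n$, quotienté par les relations \eqref{eq:double shuffle relation} de double mélange étendu ; ces dernières font intervenir des multi-indices potentiellement divergents mais se réarrangent en des relations entre les seuls $\zeta^{\mathrm{f}}(\underline{p})$ convergents lorsque les divergences se compensent. L'application $\zeta^{\mathrm{f}}(\underline{p})\mto \zeta(\underline{p})$ définit par construction un morphisme $\QQ$-linéaire surjectif $\varphi_n:\mathfrak{ds}_n\twoheadrightarrow\mathcal{Z}_n$, et la conjecture équivaut à l'injectivité de $\varphi_n$ pour tout $n\geq 0$.

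Mon plan est d'encadrer les dimensions en démontrant séparément $\dim_\QQ(\mathfrak{ds}_n)\leq d_n$ et $\dim_\QQ(\mathcal{Z}_n)\geq d_n$. La première borne est un problème purement combinatoire : il faudrait exploiter finement la compatibilité du mélange, du quasi-mélange et des relations de régularisation pour montrer que la famille des analogues formels des valeurs zêta multiples de Hoffman (indices dans $\{2,3\}$) engendre $\mathfrak{ds}_n$, par exemple en procédant par récurrence sur le poids et en s'appuyant sur la description de Ihara-Kaneko-Zagier \cite{iharakanekozagier} via une \emph{algèbre de Lie de double mélange}. La seconde borne étant totalement hors de portée directement (pour $n=5$, elle impliquerait déjà l'irrationalité du quotient $\zeta(2,3)/\zeta(3,2)$), je la contournerais par les motifs : on vérifie d'abord que les relations de double mélange étendu sont satisfaites dans l'algèbre $\mathcal{P}^{\mathrm{mot}}$ --- travail substantiel nécessitant une construction motivique soigneuse de la régularisation des intégrales itérées ---, ce qui fournit un morphisme surjectif $\mathfrak{ds}_n\twoheadrightarrow \mathcal{Z}_n^{\mathrm{mot}}$ ; combiné au théorème \ref{thm:brown bis} de Brown assurant $\dim_\QQ(\mathcal{Z}_n^{\mathrm{mot}})=d_n$ et à la majoration précédente, cela donnerait un isomorphisme $\mathfrak{ds}_n\simeq \mathcal{Z}_n^{\mathrm{mot}}$. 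L'isomorphisme se transférerait enfin à $\mathcal{Z}_n$ via la conjecture \ref{conj:periodes} des périodes.

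\textbf{Principal obstacle.} La clé de voûte est la majoration combinatoire $\dim_\QQ(\mathfrak{ds}_n)\leq d_n$, qui reste ouverte : on ne sait pas exhiber un ensemble \og assez grand\fg{} de conséquences des relations de double mélange étendu pour en déduire cette borne, et les calculs explicites en petit poids suggèrent que des identités non évidentes entre les coefficients $u^{\underline{p}}_{\underline{m},\underline{n}}$ et $v^{\underline{p}}_{\underline{m},\underline{n}}$ doivent être mobilisées. Le passage par les motifs ne dispense pas de cette majoration : il en déplace seulement le cadre, puisque même la version motivique de la conjecture \ref{conj:double shuffle} est ouverte. Toute preuve complète devra donc combiner cette avancée combinatoire avec soit la conjecture des périodes, soit une minoration effective et inatteignable à ce jour de $\dim_\QQ(\mathcal{Z}_n)$.
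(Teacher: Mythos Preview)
The statement is an open \emph{conjecture}; the paper does not prove it and offers no proof sketch. There is therefore nothing to compare your proposal against except the paper's brief discussion of the conjecture's status, and with that your analysis is in agreement.

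Your identification of the two obstructions is accurate. The transcendence lower bound $\dim_\QQ(\mathcal{Z}_n)\geq d_n$ is hopelessly out of reach, as the paper itself stresses (even $n=5$ is open). The combinatorial upper bound $\dim_\QQ(\mathfrak{ds}_n)\leq d_n$ is also open: this is exactly what the paper means by \og on ne connaît cependant pas de preuve élémentaire du fait qu'elle implique la formule de dimension de Zagier\fg, since under the double shuffle conjecture $\mathcal{Z}_n\simeq\mathfrak{ds}_n$ and that implication reduces to $\dim_\QQ(\mathfrak{ds}_n)=d_n$, whose nontrivial half is your bound. The double shuffle relations being known to hold motivically (the paper asserts this, and cites \cite{souderes} for a geometric proof), one already has the surjection $\mathfrak{ds}_n\twoheadrightarrow\mathcal{Z}_n^{\mathrm{mot}}$ and hence $\dim_\QQ(\mathfrak{ds}_n)\geq d_n$; so, as you observe, the motivic version of the conjecture is precisely equivalent to the missing upper bound.

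One remark on direction: the paper records that the double shuffle conjecture \emph{implies} the period conjecture (injectivity of the composite $\mathfrak{ds}_n\to\mathcal{Z}_n$ forces that of the intermediate map $\mathcal{Z}_n^{\mathrm{mot}}\to\mathcal{Z}_n$). Your plan uses the period conjecture the other way, as an input. Both are legitimate conditional viewpoints, and your concluding sentence correctly isolates the combinatorial bound as the irreducible unknown in either formulation.
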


Comme les relations de double mélange sont satisfaites par les \hbox{valeurs} zêta multiples motiviques, cette conjecture implique la conjecture des périodes (conjecture \ref{conj:periodes}). On ne connaît cependant pas de preuve élémentaire du fait qu'elle implique la formule de dimension de Zagier (conjecture \ref{conj:zagier dim}). La compatibilité entre la conjecture \ref{conj:double shuffle} et les conjectures du paragraphe \ref{sec:MZV} a été vérifiée expérimentalement en bas poids (voir par exemple \cite{minhetal} et \cite{mzvdatamine}).

\section{Périodes, cohomologie, motifs}\label{sec:motifs}

Ce paragraphe constitue un aparté introductif aux idées nécessaires pour comprendre la notion de valeur zêta multiple motivique. Nous renvoyons le lecteur à l'exposé de Javier Fresán dans ce volume pour plus de détails sur ces idées.

\subsection{Logarithme complexe, intégrales elliptiques, et périodes}

C'est un fait classique qu'il n'existe pas de fonction logarithme complexe, c'est-à-dire que l'équation différentielle $$f'(z)=\frac{1}{z}$$ n'a pas de solution dans l'espace des fonctions holomorphes \hbox{$f{:}\,\mathbb{C}^*\!\to\!\mathbb{C}$}. La preuve consiste à calculer une intégrale le long du chemin $\gamma:[0,1]\to \mathbb{C}^*$ parcourant le cercle unité dans le sens positif, c'est-à-dire donné par $\gamma(t)=\exp(2i\pi t)$:
\begin{equation}\label{eq:integral dz z}
\int_\gamma \frac{dz}{z} = \int_0^1 \frac{\gamma'(t)\, dt}{\gamma(t)} = \int_0^12i\pi \, dt = 2i\pi .
\end{equation}
Or le théorème de Stokes implique que l'intégrale d'une dérivée le long du chemin \emph{fermé} $\gamma$ doit s'annuler:\vspace*{-3pt}
\begin{align*}
\int_\gamma f'(z)\,dz = \int_0^1 f'(\gamma(t))\gamma'(t)\, dt&=\int_0^1 d(f(\gamma(t)))\\
&= f(\gamma(1))-f(\gamma(0))=0.
\end{align*}
Dit autrement, la formule\vspace*{-3pt}\enlargethispage{\baselineskip}%
$$\log(z)=\int_1^z\frac{dx}{x}$$
définit une fonction holomorphe \emph{multivaluée} de la variable complexe $z\in\mathbb{C}^*$, dont la valeur dépend du choix de chemin d'intégration entre $1$ et $z$ ; si on change le chemin en ajoutant un tour de $0$ dans le sens positif, on rajoute $2i\pi$ à la valeur de la fonction. 

La théorie des résidus de Cauchy explique que ce phénomène est la \emph{seule} obstruction au calcul de primitives de fonctions méromorphes avec singularité en $z=0$. Parallèlement, la théorie des résidus est un énoncé sur la topologie de $\mathbb{C}^*$: un lacet (chemin continu fermé) tracé dans $\mathbb{C}^*$ est contractile (c'est-à-dire peut être continûment contracté sur un point) si et seulement si l'intégrale de $\sfrac{dz}{z}$ sur ce chemin est nulle: le cercle unité représente donc la \emph{seule} obstruction à la déformation de lacets tracés dans $\mathbb{C}^*$. Remarquons que le nombre $2i\pi$ qui apparaît dans la comparaison entre ces deux notions d'obstruction via la formule \eqref{eq:integral dz z} est une période au sens de la définition \ref{defi:periodeKZ}. C'est aussi la période, au sens usuel, de la fonction exponentielle complexe, ce qui est une autre manière d'exprimer l'obstruction à l'existence d'un logarithme complexe.

Des considérations analogues sont au coeur de la théorie des intégrales elliptiques\footnote{Il s'agit ici d'intégrales elliptiques dites de \emph{première espèce}.}, de la forme
\begin{equation*}\label{eq:elliptic integral}
F(z)=\int_0^z \frac{dx}{\sqrt{(1-x^2)(1-k^2x^2)}},
\end{equation*}
où $k$ est une constante. Elles apparaissent au \siecle{18} dans le calcul de la période du pendule simple, ou encore dans le calcul du périmètre des ellipses (d'où elles tirent leur nom). On les interprète de manière moderne comme des intégrales sur la \emph{courbe elliptique}\footnote{Le lecteur avisé notera qu'il s'agit en fait d'une courbe elliptique privée du point à l'infini, qui apparaît en compactifiant $E$ dans le plan projectif $\mathbb{P}^2(\mathbb{C})$.}
$$E=\{(x,y)\in\mathbb{C}^2\sep y^2=(1-x^2)(1-k^2x^2)\}.$$
La fonction $F(z)$ est alors une fonction multivaluée sur $E$, car elle dépend du choix d'un chemin d'intégration. Comme une courbe ellip\-tique s'identifie topologiquement à la surface d'un tore, il y a à défor\-mation près deux contours d'intégration fermés indépendants $\gamma_1$ et~$\gamma_2$ sur $E$. Les intégrales de la forme différentielle $dx/y$ le long de $\gamma_1$ et~$\gamma_2$ s'appellent les \emph{périodes} de $E$ jouent le même rôle que $2i\pi$ dans la théorie des résidus. (Ce sont des périodes, au sens usuel, de la réciproque de $F(z)$, qu'on appelle une \emph{fonction elliptique} et qui joue le même rôle que la fonction exponentielle au paragraphe précédent. C'est dans ce contexte que s'est cristallisée l'utilisation du terme de \og période\fg au sens plus général de la définition \ref{defi:periodeKZ}.)

\subsection{Cohomologie et périodes}

Les discussions du paragraphe précédent s'interprètent dans le langage moderne comme une comparaison entre deux théories d'obstruction associées à une variété algébrique $X$ définie par des équations polynomiales à coefficients dans un sous-corps $K\subset \mathbb{C}$. Dans le cas de la théorie des résidus, $X=\mathbb{C}^*$ correspond aux solutions de l'équation $xy=1$ dans $\mathbb{C}^2$ et on peut prendre $K=\mathbb{Q}$. Dans le cas des intégrales elliptiques, $X=E$ et $K$ est un sous-corps de $\mathbb{C}$ qui contient le paramètre $k^2$.

\begin{enumerate}
\item D'une part on peut mesurer l'obstruction \emph{analytique} à calculer des primitives de formes différentielles sur $X$, mesurée par des groupes de \emph{cohomologie de de Rham} $H^n_{\mathrm{dR}}(X)$, où $n$ est le degré des formes différentielles. Une découverte importante de Grothendieck est que toutes ces obstructions viennent de formes différentielles \emph{algé\-briques}: les groupes de cohomologie de de Rham sont donc des espaces vectoriels définis sur le corps $K$ \cite{grothendieckderham}.
\item D'autre part on peut mesurer l'obstruction \emph{topologique} à défor\-mer des domaines tracées continûment sur $X$, mesurée par des groupes de \emph{cohomologie de Betti} (ou singulière) $H^n_{\mathrm{B}}(X)$, où $n$ est la dimension des domaines. Ce sont des espaces vectoriels définis sur $\mathbb{Q}$.
\end{enumerate}
La comparaison entre ces deux théories se fait par intégration des formes différentielles sur les domaines et donne lieu à l'isomorphisme suivant qui combine l'isomorphisme de de Rham \cite{derhamthese} et la comparaison, établie par Grothendieck, entre cohomologie de de Rham analytique et algébrique:\vspace*{-3pt}
\begin{equation}\label{eq:isom periodes}
\int: H^n_{\mathrm{dR}}(X)\otimes_K\mathbb{C} \stackrel{\simeq}{\longrightarrow} H^n_{\mathrm{B}}(X)\otimes_{\mathbb{Q}}\mathbb{C}.
\end{equation}
On l'appelle \emph{l'isomorphisme des périodes} et une matrice correspondante (relative à une $K$-base de $H^n_{\mathrm{dR}}(X)$ et une $\mathbb{Q}$-base de $H^n_{\mathrm{B}}(X)$) une \emph{matrice des périodes}. Ses coefficients sont certaines intégrales de formes différentielles algébriques sur des domaines tracés sur $X$ et sont des périodes au sens de la définition \ref{defi:periodeKZ} si $K$ est un corps de nombres (une extension finie de $\mathbb{Q}$). Réciproquement, on peut montrer que si on s'autorise à considérer des groupes de cohomologie \emph{relative} associés à des paires de variétés $(X,Y)$, toute période au sens de la définition \ref{defi:periodeKZ} apparaît dans une matrice des périodes pour une paire définie sur un corps de nombres \cite{hubermullerstachbook}.

Les exemples étudiés au paragraphe précédent concernent des inté\-grales le long de chemins, c'est-à-dire le cas $n=1$. La théorie des résidus nous apprend que les groupes de cohomologie $H^1_{\mathrm{dR/B}}(\mathbb{C}^*)$ sont de dimension $1$. La matrice des périodes contient la constante $\int_\gamma \sfrac{dz}{z} = 2i\pi$. Dans le cas d'une courbe elliptique, les groupes de cohomologie $H^1_{\mathrm{dR/B}}(E)$ sont de dimension~$2$. La matrice des périodes contient notamment les inté\-grales de $dx/y$ le long des chemins $\gamma_1$ et~$\gamma_2$\footnote{Elle contient aussi deux autres périodes appelées intégrales elliptiques de \emph{seconde espèce}, ou \emph{quasi-périodes} de $E$.}.

\subsection{La philosophie des motifs}

L'isomorphisme des périodes \eqref{eq:isom periodes} est un phénomène intriguant puisqu'il fait le lien entre deux notions d'obstruction \emph{a priori} très différentes. D'autres théories cohomologiques, mesurant d'autres obstructions encore, ont été définies au cours du \siecle{20} et s'avèrent être toutes comparables entre elles au sens où elles sont reliées par des isomorphismes canoniques sur le modèle de \eqref{eq:isom periodes}. La cohomologie étale, qui joue un rôle important dans la preuve des conjectures de Weil par Grothendieck et Deligne, en est un exemple important \cite{weilconjectures, grothendiecklefschetz, deligneweil1, deligneweil2}. Dans ce cas, la comparaison avec la cohomologie singulière permet de relier le comptage des solutions à un système d'équations polynomiales sur des corps finis à la topologie de la variété complexe correspondante.

Ces \og coïncidences\fg ont poussé Grothendieck à considérer la possibilité que les différentes cohomologies associées à une variété algébrique $X$ n'étaient que des \og projections\fg d'un objet plus primitif, le \emph{motif} de $X$. Contrairement à la cohomologie, ce motif n'est pas un objet algébrique comme un espace vectoriel, mais se manipule essentiellement de la même manière (par exemple, on peut parler de noyau ou d'image d'un morphisme entre motifs, prendre des produits cartésiens ou tensoriels, etc.)\footnote{Les motifs sont, selon l'idée de Grothendieck, des objets d'une catégorie \emph{tannakienne}.}. Les diffé\-ren\-tes théories de cohomologie sont alors vues comme des opérateurs\footnote{Ces opérateurs sont appelées des \emph{foncteurs de réalisation}.} qui permettent d'associer un espace vectoriel à un motif, par exemple
$$M\mto M_{\mathrm{dR}}\quad \text{et}\quad M\mto M_{\mathrm{B}}$$
pour la cohomologie de de Rham et la cohomologie de Betti. Une distinction fondamentale est qu'un morphisme entre motifs est toujours \og d'origine géométrique\fg (par exemple induit par un morphisme entre variétés algébriques, ou plus généralement par des cycles algébriques).

La mise en place de la théorie des motifs rêvée par Grothendieck a occupé les mathématiciens depuis maintenant un demi-siècle et les outils et techniques qu'ils ont mis en place à cette fin ont indéniablement bouleversé la géométrie algébrique et arithmétique. La théorie des valeurs zêta multiples motiviques dont il est question ici n'en est qu'une des nombreuses manifestations. Il y a plusieurs approches à la théorie des motifs et nous renvoyons le lecteur aux livres \cite{andrebook} et \cite{hubermullerstachbook} pour plus de détails sur les développements récents. Dans la suite de ce texte, nous nous affranchirons de ces difficultés en supposant fixée une théorie des motifs avec les propriétés attendues. Nous serons particulièrement intéressés par la famille de motifs qui s'obtiennent (en un sens précis) à partir du motif correspondant au groupe de cohomologie $H^1(\mathbb{C}^*)$, qui est noté $\mathbb{Q}(-1)$ et est appelé le \emph{motif de Lefschetz}. Ces motifs sont dits \emph{de Tate mixtes} et constituent une partie minuscule, mais déjà extrêmement riche, de la théorie des motifs.

\subsection{Périodes motiviques}\label{subsec:periodes motiviques}

On peut maintenant donner une définition (à faible degré d'imprécision) d'une période motivique:

\begin{defi}
Une \emph{période motivique} est un triplet $(M,\varphi,\omega)$ avec
\begin{itemize}
\item
$M$ un motif,
\item
$\omega\in M_{\mathrm{dR}}$ un élément de sa réalisation de de Rham et
\item
$\varphi\in M_{\mathrm{B}}^\vee$ une forme linéaire sur sa réalisation de Betti. 
\end{itemize}
On considère que deux périodes motiviques $(M,\varphi,\omega)$ et $(M',\varphi',\omega')$ sont égales s'il existe un morphisme de motifs $f:M\to M'$ tel que $f_{\mathrm{dR}}(\omega)=\omega'$ et $f_{\mathrm{B}}^\vee(\varphi')=\varphi$.

La \emph{période} associée à $(M,\varphi,\omega)$ est le nombre complexe
$$\mathrm{per}(M,\varphi,\omega)= \langle\varphi, \int \omega\rangle,$$
où $\int$ est l'isomorphisme de comparaison \eqref{eq:isom periodes}.
\end{defi}

Les périodes motiviques forment une algèbre\footnote{On utilise ici un symbole tilde pour éviter la confusion avec l'algèbre $\mathcal{P}^{\mathrm{mot}}$ définie plus haut dans ce texte, qui est une sous-algèbre de $\widetilde{\mathcal{P}}^{\mathrm{mot}}$.} $\widetilde{\mathcal{P}}^{\mathrm{mot}}$ et on a alors un morphisme d'algèbres, appelée le morphisme de période:
\begin{equation}\label{eq:morphisme per tilde}
\mathrm{per}:\widetilde{\mathcal{P}}^{\mathrm{mot}}\to\mathbb{C}.
\end{equation}
En pratique, $M$ est le motif d'une variété $X$ (ou plus généralement d'une paire de variétés), $\omega$ est la classe d'une forme différentielle algébrique sur $X$ et $\varphi$ est la classe d'un domaine tracé sur $X$. La période correspondante est alors tout simplement l'intégrale de $\omega$ sur $\varphi$. Il est donc aisé de construire des périodes motiviques correspondant à des périodes \emph{concrètes} données, comme $2i\pi$ ou les intégrales elliptiques étudiées plus haut. Par exemple, la version motivique de $2i\pi$ est
$$(2i\pi)^{\mathrm{mot}} = (\mathbb{Q}(-1),[\gamma],[dx/x]).$$
Une classe de morphismes entre motifs est donnée par les morphismes entre variétés algébriques, auquel cas l'égalité entre périodes motiviques prévue par la définition est une version abstraite de la formule de changement de variables du calcul intégral.

Une première intuition est donc qu'une période motivique est l'\emph{ingrédient géométrique} derrière une période. Une distinction fondamentale est que par définition, les relations entre périodes motiviques sont toutes \emph{de nature géométrique} car contrôlées par des morphismes entre motifs. La conjecture des périodes de Grothendieck affirme que c'est aussi le cas pour les périodes:

\begin{conj}[Conjecture des périodes de Grothendieck, cas général]\label{conj:periodes general}
Le morphisme de période \eqref{eq:morphisme per tilde} est injectif.
\end{conj}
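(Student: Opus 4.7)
La conjecture des périodes de Grothendieck étant l'un des grands problèmes ouverts de la théorie, le plan ne peut prétendre à une démonstration complète ; il s'agit plutôt d'esquisser la stratégie naturelle suggérée par le cadre tannakien. L'idée directrice est que le groupe de Galois motivique $G^{\mathrm{mot}}$ agit sur $\widetilde{\mathcal{P}}^{\mathrm{mot}}$, avec $\QQ$ pour sous-algèbre des invariants, et que l'injectivité de $\mathrm{per}$ équivaut à l'énoncé que, pour tout motif $M$, le degré de transcendance sur $\QQ$ de l'algèbre engendrée par les périodes de $M$ coïncide avec la dimension du groupe de Galois motivique de $M$. Le plan consisterait donc à filtrer $\widetilde{\mathcal{P}}^{\mathrm{mot}}$ par des sous-algèbres engendrées par des sous-catégories tannakiennes de plus en plus larges, et à démontrer l'injectivité étage par étage.

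Les premiers étages correspondent à des cas classiques ou déjà connus. Pour les motifs d'Artin, l'injectivité se ramène à la théorie de Galois classique et à l'indépendance linéaire des différents plongements d'un corps de nombres dans $\mathbb{C}$. Pour les motifs de Tate purs, seule intervient la période $(2i\pi)^{\mathrm{mot}}$ et la conjecture se réduit à la transcendance de $\pi$, connue depuis Lindemann. L'étage cru\-cial pour le sujet de ce texte est celui des motifs de Tate mixtes sur $\mathbb{Z}$, qui héberge les valeurs zêta multiples motiviques : grâce aux résultats évoqués au paragraphe \ref{subsec:MZV mot first}, la structure de $G^{\mathrm{mot}}$ y est explicite et l'algèbre des périodes motiviques est isomorphe à $\mathcal{F}$, ce qui ramène la conjecture aux énoncés d'indépendance algébrique sur les valeurs zêta (conjectures \ref{conj:zeta impairs} et \ref{conj:structure algebre Z}). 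Il resterait ensuite à traiter les motifs de poids supérieur associés à des variétés abéliennes, puis les motifs mixtes généraux.

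L'obstacle principal, qui est précisément ce qui maintient la conjecture ouverte, est qu'il n'existe actuellement aucune technique générale permettant de traduire une information galoisienne --- de nature algébro-géométrique --- en un énoncé de transcendance sur des valeurs numériques. Les méthodes disponibles (Gelfond-Schneider, Baker-Wüstholz, Nesterenko, approximations diophantiennes à la Apéry) traitent des situations très particulières et ne se généralisent pas à une famille définie de manière purement motivique. Un objectif intermédiaire plus modeste mais déjà spectaculaire serait de démontrer la conjecture \ref{conj:double shuffle}, qui ramène l'injectivité de $\mathrm{per}$ sur $\mathcal{Z}^{\mathrm{mot}}$ à une question combinatoire concrète ; mais même cet énoncé, pourtant plus élémentaire en apparence, semble actuellement hors de portée des méthodes existantes. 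Toute approche réaliste devra donc probablement faire émerger un pont radicalement nouveau entre géométrie motivique et transcendance.
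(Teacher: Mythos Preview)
L'énoncé en question est une \emph{conjecture}, pas un théorème : le texte ne contient aucune démonstration, il se contente de formuler la conjecture et d'en commenter la portée (\og une conjecture très forte\fg) avant de la rapprocher de la conjecture de Kontsevich--Zagier. Il n'y a donc pas de preuve dans l'article à laquelle comparer votre proposition.

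Votre texte reconnaît d'emblée ce caractère ouvert et ne prétend pas démontrer l'énoncé ; en ce sens il n'y a pas d'erreur à signaler. Votre discussion de la stratégie tannakienne et de la reformulation en termes de degré de transcendance est d'ailleurs cohérente avec ce que le papier dit plus loin (\og une reformulation de la conjecture des périodes est que la dimension de $G_M$ est égale au degré de transcendance de l'algèbre engendrée par les périodes de $M$\fg). Une petite remarque : vous affirmez que la conjecture~\ref{conj:double shuffle} \og ramène l'injectivité de $\mathrm{per}$ sur $\mathcal{Z}^{\mathrm{mot}}$ à une question combinatoire concrète\fg, mais c'est l'inverse --- le papier précise que la conjecture~\ref{conj:double shuffle} \emph{implique} la conjecture des périodes (restreinte à $\mathcal{Z}^{\mathrm{mot}}$), elle ne lui est pas équivalente a priori.
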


C'est une conjecture très forte: elle affirme essentiellement que toute relation algébrique entre périodes peut être démontrée en n'utilisant que des outils algébriques. Dit autrement, il ne devrait faire aucune différence de travailler avec les périodes ou leurs versions motiviques. La conjecture \ref{conj:periodes general} est du même goût que la conjecture plus explicite suivante\footnote{La conjecture de Kontsevich-Zagier est essentiellement équivalente à la conjecture des périodes pour l'approche de la théorie des motifs due à Nori \cite{hubermullerstachbook}. On ne sait pas si cette approche donne la même théorie que d'autres approches, dont celle, due à Voevodsky, avec laquelle nous travaillons implicitement dans ce texte \cite{voevodskytriangulated}.}:

\begin{conj}[Conjecture de Kontsevich-Zagier, \cite{kontsevichzagier}]
Toutes les relations linéaires entre périodes sont des conséquences~de:
\begin{enumerate}
\item la bilinéarité de l'intégration ;
\item la formule de changement de variable ;
\item la formule de Stokes.
\end{enumerate}
\end{conj}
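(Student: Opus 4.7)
Le plan est de ramener la conjecture à la conjecture \ref{conj:periodes general} des périodes de Grothendieck, via la construction motivique due à Nori (évoquée en note de bas de page de la conjecture \ref{conj:periodes general}). Je commencerais par définir une $\QQ$-algèbre $\widetilde{\mathcal{P}}^{\mathrm{KZ}}$ de \emph{périodes formelles au sens de Kontsevich-Zagier} : c'est le quotient du $\QQ$-espace vectoriel libre engendré par les quadruplets $(X,Y,\omega,\sigma)$ --- où $X$ est une variété algébrique sur $\QQ$, $Y\subset X$ un sous-schéma fermé, $\omega$ une forme différentielle algébrique de degré $n$ sur $X$ de restriction nulle à $Y$, et $\sigma$ une chaîne singulière rationnelle sur $X(\mathbb{C})$ de dimension $n$ et de bord dans $Y(\mathbb{C})$ --- par les seules relations engendrées par (1), (2) et (3). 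L'application $(X,Y,\omega,\sigma)\mapsto \int_\sigma\omega$ définit alors un morphisme de $\QQ$-algèbres $\mathrm{per}^{\mathrm{KZ}}:\widetilde{\mathcal{P}}^{\mathrm{KZ}}\to\mathbb{C}$ dont l'image est précisément l'algèbre des périodes, et l'énoncé à démontrer est l'injectivité de $\mathrm{per}^{\mathrm{KZ}}$.

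L'étape-clé serait de construire un isomorphisme de $\QQ$-algèbres $\widetilde{\mathcal{P}}^{\mathrm{KZ}}\simeq\widetilde{\mathcal{P}}^{\mathrm{mot}}$ avec les périodes motiviques au sens de Nori, compatible avec les morphismes vers~$\mathbb{C}$. Dans un sens, chaque quadruplet $(X,Y,\omega,\sigma)$ fournit une période motivique : $\omega$ représente une classe dans la cohomologie de de Rham relative $H^n_{\mathrm{dR}}(X,Y)$, $\sigma$ définit par dualité une forme linéaire sur $H^n_{\mathrm{B}}(X,Y)$, et les trois relations (1), (2), (3) sont satisfaites au niveau motivique : la bilinéarité traduit la structure de $\QQ$-espace vectoriel des réalisations ; la formule de changement de variables exprime la fonctorialité des motifs sous les morphismes de variétés algébriques ; la formule de Stokes exprime la nullité des bords en cohomologie relative. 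Dans l'autre sens, la construction tannakienne de Nori exhibe $\widetilde{\mathcal{P}}^{\mathrm{mot}}$ précisément par générateurs et relations de cette forme, fournissant l'inverse.

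L'obstacle principal apparaît alors en pleine lumière : l'injectivité de $\mathrm{per}^{\mathrm{KZ}}$ devient équivalente à celle du morphisme de période motivique $\widetilde{\mathcal{P}}^{\mathrm{mot}}\to\mathbb{C}$, c'est-à-dire à la conjecture \ref{conj:periodes general} de Grothendieck. Or cette dernière est massivement ouverte : on ne sait pas démontrer la transcendance d'une seule valeur zêta multiple non triviale, ni l'irrationalité de $\zeta(5)$ ou de $\zeta(3)/\pi^3$. Le plan ci-dessus ne fait donc que \emph{traduire} la conjecture de Kontsevich-Zagier en langage motivique, sans offrir de prise sur le problème fondamental : produire, à partir d'une relation algébrique vérifiée par des périodes complexes, une relation de nature \emph{géométrique} entre les motifs sous-jacents. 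Cette étape d'injectivité du morphisme de période est le cœur de la difficulté, et aucune stratégie générale n'est aujourd'hui connue pour l'attaquer.
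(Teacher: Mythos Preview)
The statement in question is a \emph{conjecture}, not a theorem: the paper offers no proof, and none is known. There is therefore no ``paper's own proof'' to compare your proposal against. You recognize this yourself --- your final paragraph explicitly concludes that the strategy merely translates the Kontsevich--Zagier conjecture into motivic language without resolving the core difficulty, since the injectivity of the period morphism (Conjecture~\ref{conj:periodes general}) remains wide open.

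Your analysis is nonetheless correct and matches precisely what the paper asserts in the footnote attached to the Kontsevich--Zagier conjecture: that it is ``essentiellement équivalente à la conjecture des périodes pour l'approche de la théorie des motifs due à Nori'', with reference to \cite{hubermullerstachbook}. You have fleshed out this one-line remark considerably --- defining the formal Kontsevich--Zagier period algebra $\widetilde{\mathcal{P}}^{\mathrm{KZ}}$, explaining why relations (1)--(3) are exactly those imposed in Nori's tannakian construction, and isolating the injectivity of $\mathrm{per}$ as the residual obstruction. One caveat worth flagging: the paper's footnote also notes that it is not known whether Nori's category of motives coincides with Voevodsky's (the one implicitly used elsewhere in the text), so the identification $\widetilde{\mathcal{P}}^{\mathrm{KZ}}\simeq\widetilde{\mathcal{P}}^{\mathrm{mot}}$ requires specifying which $\widetilde{\mathcal{P}}^{\mathrm{mot}}$ one means --- as you do, by writing ``au sens de Nori''. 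In short: your proposal is a faithful and more detailed unpacking of the paper's own commentary, not a proof, and it does not pretend to be one.
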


Remarquons que les relations de double mélange entre valeurs zêta multiples ne sont pas directement de nature géométrique puisque le produit de quasi-mélange résulte de la manipulation de séries. On peut cependant les démontrer en ne manipulant que des intégrales \cite{souderes}, ce qui montre qu'elles vérifient les prédictions des deux conjectures ci-dessus.

\section{Intégrales itérées et groupe fondamental de \texorpdfstring{$\mathbb{P}^1\backslash\{0,1,\infty\}$}{P1}}\label{sec:pi1mot}

On a vu à la proposition \ref{prop:MZVs it int} que les valeurs zêta multiples ont des représen\-ta\-tions intégrales. Nous développons maintenant cette idée en faisant intervenir le groupe fondamental de l'espace $\mathbb{P}^1\backslash\{0,1,\infty\}$. C'est ce point de vue géométrique qui permet de définir et manipuler les valeurs zêta multiples motiviques.

\subsection{Intégrales itérées et polylogarithmes}

On étudie l'espace $X=\mathbb{P}^1(\mathbb{C})\setminus\{0,1,\infty\} = \mathbb{C}\setminus\{0,1\}$. On rappelle les deux formes différentielles élémentaires de degré $1$ sur $X$:
$$\omega_0 = \frac{dx}{x} \quad \text{et} \quad \omega_1 = \frac{dx}{1-x}\cdot$$

Pour deux points-base $a,b\in X$, considérons des chemins continus $\gamma:[0,1]\to X$ tels que $\gamma(0)=a$ et $\gamma(1)=b$. L'ensemble de tels chemins modulo la relation d'homotopie (qui identifie deux chemins qui peuvent être déformés continûment l'un en l'autre) est noté $\pi_1(X,a,b)$. La composition des chemins induit une application
\begin{equation}\label{eq:composition chemins pi1}
\pi_1(X,a,b)\times \pi_1(X,b,c)\to \pi_1(X,a,c).
\end{equation}
Cela fait notamment de $\pi_1(X,a,a)=:\pi_1(X,a)$ un groupe, appelé le \emph{groupe fondamental} de $X$ basé en $a$. Plus généralement, la collection de tous les ensembles $\pi_1(X,a,b)$ munie des opérations de composition~\eqref{eq:composition chemins pi1} forme le \emph{groupoïde fondamental} de $X$. On montre facilement que le groupe fondamental ne dépend pas du point-base et est isomorphe à un groupe libre sur deux générateurs $\gamma_0$ et $\gamma_1$, qui sont les (classes des) lacets autour de $0$ et $1$ respectivement. Par exemple, pour $a=\sfrac{1}{2}$ on peut choisir $\gamma_0(t)=\frac{1}{2}\exp(2i\pi t)$ et $\gamma_1(t)=1-\frac{1}{2}\exp(2i\pi t)$.

Pour un chemin $\gamma:[0,1]\to X$ tel que $\gamma(0)=a$ et $\gamma(1)=b$, et pour tout mot $\omega_{i_1}\cdots\omega_{i_n}$ avec $i_k\in\{0,1\}$, on peut considérer l'\emph{intégrale itérée}
\begin{equation}\label{eq:def it int general}
\int_\gamma\omega_{i_1}\cdots\omega_{i_n} = \int_{0\leq t_1\leq \cdots \leq t_n\leq 1} \omega_{i_1}(\gamma(t_1)) \cdots \omega_{i_n}(\gamma(t_n)).
\end{equation}
Pour $n=1$ ce sont simplement des intégrales de $1$-formes le long de chemins, et on obtient des valeurs spéciales de la fonction logarithme:
$$\int_\gamma\omega_0 = \log(b/a).$$
Un exemple d'intégrale itérée de longueur $n=2$ est:
$$\int_\gamma\omega_0\omega_1 = \int_\gamma \frac{dx}{1-x}\, \log(x/a).$$
Après changement de variable, elle peut s'exprimer en fonction d'une valeur spéciale de la fonction \emph{dilogarithme}:
$$\mathrm{Li}_2(z)= \sum_{k=1}^\infty \frac{z^k}{k^2} = - \int_0^z \frac{dx}{x}\,\log(1-x).$$
Cette fonction est bien définie sur le disque unité $\{|z|<1\}$ et s'étend en une fonction \emph{multivaluée} sur $X$. Les intégrales itérées de longueur quelconque contiennent des valeurs spéciales des fonctions \emph{polylogarithmes}
$$\mathrm{Li}_n(z)= \sum_{k=1}^\infty \frac{z^k}{k^n} = \int_0^z\frac{dx}{x}\,\mathrm{Li}_{n-1}(z),$$
et plus généralement des \emph{polylogarithmes multiples}:
\begin{equation}\label{eq:polylog multiple}
\mathrm{Li}_{n_1,\ldots,n_r}(z_1,\ldots,z_r) = \sum_{1\leq k_1<\cdots <k_r} \frac{z_1^{k_1}\cdots z_r^{k_r}}{k_1^{n_1}\cdots k_r^{n_r}} \cdot
\end{equation}

\subsection{L'isomorphisme de Chen}

La théorie des intégrales itérées de Chen suit la même philosophie que les travaux de de Rham \cite{derhamthese} sur l'homologie singulière ayant mené à l'isomorphisme des périodes \eqref{eq:isom periodes}: elle vise à comprendre le groupe fondamental de $X$ en termes de formes différentielles via les inté\-grales itérées. Le groupe fondamental étant un objet profondément \og non abélien\fg, la théorie de Chen ne concerne qu'une approximation linéaire du groupe fondamental, appelée sa \emph{complétion pro-unipotente} ou \emph{complétion de Malcev}. Pour la définir, notons $G=\nobreak\pi_1(X,a)$ et soit $\mathbb{C}[G]$ son algèbre de groupe, dont une base est donnée par l'ensemble $G$ et la multiplication est donnée par l'extension linéaire de la loi de groupe. On a un idéal $I\subset\nobreak \mathbb{C}[G]$, l'\emph{idéal d'augmentation}, engendré par les différences de deux éléments de $G$. On~considère ses puissances $I^{n+1}$, qui sont des idéaux de plus en plus petits dans $\mathbb{C}[G]$. Quitte à choisir un chemin de $a$ vers $b$, on a une bijection $\pi_1(X,a)\simeq \pi_1(X,a,b)$ et on peut voir les idéaux $I^{n+1}$ comme des sous-espaces vectoriels de l'espace vectoriel $\mathbb{C}[\pi_1(X,a,b)]$\footnote{Ces sous-espaces vectoriels ne dépendent pas du choix du chemin de $a$ vers $b$.}. Les~quotients $\mathbb{C}[\pi_1(X,a,b)]/I^{n+1}$ sont des approximations linéaires de plus en plus fidèles de $\pi_1(X,a,b)$.

Notons $\mathbb{C}\langle\omega_0,\omega_1\rangle$ l'espace des polynômes non commutatifs en les symboles $\omega_0$, $\omega_1$, et $\mathbb{C}\langle \omega_0,\omega_1\rangle_{\leq n}$ le sous-espace des polynômes de degré $\leq n$, engendré par les mots (monômes) de longueur $\leq n$. L'intégrale itérée \eqref{eq:def it int general} est invariante par homotopie et induit donc, pour chacun de ces mots, une forme linéaire sur l'espace $\mathbb{C}[\pi_1(X,a,b)]$, dont on peut montrer qu'elle s'annule sur le sous-espace $I^{n+1}$.

\begin{thm}[Chen \cite{chen}]
Les intégrales itérées induisent un isomorphisme de $\mathbb{C}$-espaces vectoriels:
\begin{equation}\label{eq:isom chen}
\int: \mathbb{C}\langle\omega_0,\omega_1\rangle_{\leq n} \stackrel{\simeq}{\longrightarrow} \left(\mathbb{C}[\pi_1(X,a,b)]/I^{n+1}\right)^\vee.
\end{equation}
\end{thm}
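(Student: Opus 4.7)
La stratégie consiste à vérifier que l'application est bien définie, à compter les dimensions des deux côtés, puis à établir l'injectivité en passant au gradué associé. Pour le premier point, les intégrales itérées \eqref{eq:def it int general} sont invariantes par homotopie (argument classique de Chen, reposant sur la formule de Stokes et le fait que $\omega_0$ et $\omega_1$ sont fermées) et définissent donc des formes linéaires sur $\mathbb{C}[\pi_1(X,a,b)]$. Le point crucial est la \emph{formule de composition des chemins}
$$\int_{\gamma\cdot \eta}\omega_{i_1}\cdots\omega_{i_k} = \sum_{j=0}^k \int_\gamma \omega_{i_1}\cdots\omega_{i_j}\,\int_\eta\omega_{i_{j+1}}\cdots\omega_{i_k},$$
avec la convention que l'intégrale itérée vide vaut $1$. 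Appliquée itérativement à un élément $(\eta_1-1)\cdots(\eta_{n+1}-1)\in I^{n+1}$, elle fait apparaître dans chaque terme au moins une intégrale itérée vide sur un $\eta_i-1$, qui vaut $1-1=0$; ainsi tout mot de longueur $\leq n$ s'annule sur $I^{n+1}$, et l'application \eqref{eq:isom chen} est bien définie.

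Pour le comptage des dimensions, le membre de gauche a trivialement dimension $1+2+\cdots+2^n=2^{n+1}-1$. Pour le membre de droite, $X=\mathbb{P}^1(\mathbb{C})\setminus\{0,1,\infty\}$ se rétracte sur un bouquet de deux cercles, ce qui montre que $\pi_1(X,a)$ est libre sur deux générateurs, les lacets $\gamma_0,\gamma_1$ autour de $0$ et $1$. La théorie de Magnus identifie alors le gradué associé à la filtration par les puissances de $I$ à l'algèbre tensorielle libre $\mathbb{C}\langle X_0,X_1\rangle$, où $X_i$ correspond à la classe de $\gamma_i-1$ dans $I/I^2$, ce qui donne $\dim(\mathbb{C}[\pi_1(X,a,b)]/I^{n+1})=2^{n+1}-1$. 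Les deux espaces étant de même dimension finie, il suffit de prouver l'injectivité.

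Enfin, pour l'injectivité, je filtrerais la source par la longueur des mots et le but par les puissances de $I$ et passerais au gradué associé. Sur la composante homogène de degré $k$, la formule de composition donne l'accouplement simplifié
$$\bigl\langle(\gamma_{j_1}-1)\cdots(\gamma_{j_k}-1),\ \omega_{i_1}\cdots\omega_{i_k}\bigr\rangle = \prod_{\ell=1}^k\int_{\gamma_{j_\ell}}\omega_{i_\ell},$$
tout autre terme contenant une intégrale itérée de longueur $<k$ sur un produit de longueur $k$, qui s'annule comme au premier paragraphe. En choisissant pour $\gamma_0,\gamma_1$ de petits lacets autour de $0$ et de $1$, la théorie des résidus donne $\int_{\gamma_i}\omega_j=\pm 2i\pi\,\delta_{ij}$: la matrice de l'accouplement gradué est diagonale à coefficients non nuls, ce qui établit la non-dégénérescence cherchée. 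L'obstacle principal est conceptuel plus qu'analytique, et consiste à articuler proprement la formule de composition (de nature combinatoire) avec la structure du gradué de Magnus (de nature algébrique), les ingrédients analytiques se réduisant à des calculs élémentaires de résidus.
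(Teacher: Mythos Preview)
Le texte est un article de survol et ne démontre pas ce théorème: il l'énonce avec une référence à Chen et se contente d'un commentaire sur sa version générale pour une variété différentielle quelconque. Il n'y a donc pas de preuve du papier à laquelle comparer la vôtre.

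Cela dit, votre argument est correct et suit l'approche standard. Les trois étapes s'articulent bien: la formule de composition des chemins, appliquée au coproduit de déconcatenation itéré, montre effectivement qu'un mot de longueur $\leq n$ s'annule sur un produit $(\eta_1-1)\cdots(\eta_{n+1}-1)$ puisque l'une des tranches est vide; le calcul de dimension via le gradué de Magnus du groupe libre est exact; et le passage au gradué pour l'injectivité est justifié car l'application respecte les filtrations (votre premier point montre précisément que $\mathbb{C}\langle\omega_0,\omega_1\rangle_{\leq k}$ s'envoie dans $(\mathbb{C}[\pi_1]/I^{k+1})^\vee$). Le calcul de résidus $\int_{\gamma_i}\omega_j=\pm 2i\pi\,\delta_{ij}$ est immédiat par invariance homotopique et la formule du résidu.

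Deux points de détail que vous pourriez préciser. D'une part, votre argument d'injectivité traite implicitement le cas $a=b$ (les $\gamma_i$ sont des lacets); pour $a\neq b$, il faut invoquer le transport par un chemin $\delta$ de $a$ vers $b$ comme le fait le texte, ce qui identifie les quotients $\mathbb{C}[\pi_1(X,a,b)]/I^{n+1}$ et $\mathbb{C}[\pi_1(X,a)]/I^{n+1}$ et ramène au cas des lacets. D'autre part, la formule d'accouplement gradué que vous écrivez découle bien de la formule de composition, mais il vaudrait la peine de dire explicitement que les termes de longueur $<k$ dans le développement s'annulent \emph{modulo} $I^{k+1}$ (et non absolument), ce qui est exactement ce dont on a besoin au niveau gradué.
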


Dans la version la plus générale du théorème, $X$ peut être remplacé par n'importe quelle variété différentielle. Le terme de gauche de l'isomorphisme de Chen doit être alors remplacé par un objet qui sélectionne les intégrales itérées qui sont invariantes par homotopie (cette condition est automatiquement satisfaite dans le cas qu'on étudie ici).

\subsection{Groupe fondamental motivique et valeurs zêta multiples motiviques}\label{subsec:groupe fondamental motivique}

L'isomorphisme de Chen \eqref{eq:isom chen} est d'apparence analogue à l'isomorphisme des périodes \eqref{eq:isom periodes} puisqu'il compare un espace vectoriel de formes différentielles algébriques à un invariant topologique de $X$. Cette analogie peut être rendue précise par un théorème attribué par Goncharov à Beilinson \cite[Th.\,4.1]{goncharovmultiplepolylogs}, qui affirme que l'isomorphisme de Chen est un cas particulier de l'isomorphisme des périodes pour un certain groupe de cohomologie (relative). On peut alors considérer le motif qui correspond à ce groupe de cohomologie puis faire tendre $n$ vers $+\infty$ ; l'objet obtenu est abusivement appelé le \emph{groupe fondamental motivique} de $X$ avec points base $a$,~$b$. Le slogan est donc: les intégrales itérées sont les périodes du groupe fondamental motivique. La théorie des groupes fondamentaux motiviques a été développée par Deligne et Goncharov \cite{delignedroiteprojective, goncharovdihedral,goncharovgaloissym,delignegoncharov} et fait écho à des travaux plus anciens de Hain et Hain-Zucker sur les structures de Hodge sur le groupe fondamental \cite{hainhomotopy, hainderham, haingeometry, hainzucker}.

Afin d'étudier les valeurs zêta multiples, il faut d'après la proposition \ref{prop:MZVs it int} considérer des intégrales itérées \eqref{eq:def it int general} pour le choix de chemin $\gamma=[0,1]$. Or les extrémités de ce chemin ne sont pas contenues dans $X=\mathbb{P}^1(\mathbb{C})\setminus\{0,1,\infty\}$ et la théorie que nous venons d'esquisser ne s'applique pas. On a alors besoin d'une variante où les points-base $a$ et $b$ ne sont plus des points de $X$ mais des \emph{points-base tangentiels à l'infini}, c'est-à-dire des directions tangentes en un point $0$, $1$ ou $\infty$ qui est \og à l'infini\fg du point de vue de l'espace $X$. Les intégrales itérées peuvent alors être divergentes et pour donner un sens à l'isomorphisme de Chen \eqref{eq:isom chen} il faut les \emph{régulariser}: on retrouve alors la régularisation de mélange déjà évoquée au paragraphe \ref{subsec:shuffle}. 

Une fois ces constructions réalisées, on est alors muni d'un motif~$M$ satisfaisant à:
\begin{itemize}
\item la réalisation de de Rham de $M$ est l'espace $M_{\mathrm{dR}}=\mathbb{C}\langle\omega_0,\omega_1\rangle$. On a pour tout multi-indice $\underline{n}=(n_1,\ldots,n_r)$, convergent ou non, un élément
$$\Omega_{\underline{n}}=\omega_1\omega_0^{n_1-1}\cdots \omega_1\omega_0^{n_r-1} \in M_{\mathrm{dR}}\,;$$
\item
la réalisation de Betti $M_{\mathrm{B}}$ est l'espace des fonctions sur le complété pro-unipotent du groupe fondamental de $X$ avec les points-base tangentiels adéquats en $0$ et en $1$. Le segment unité définit un élément
$$[0,1] \in M_{\mathrm{B}}^\vee.$$
\end{itemize}
En suivant la recette du paragraphe \ref{subsec:periodes motiviques}, on peut alors donner la définition des valeurs zêta multiples motiviques:

\begin{defi}
On définit la \emph{valeur zêta multiple motivique}
$$\zeta^{\mathrm{mot}}(n_1,\ldots,n_r) = (M,[0,1], \Omega_{\underline{n}}) \in\widetilde{\mathcal{P}}^{\mathrm{mot}}.$$
\end{defi}

La période associée à $\zeta^{\mathrm{mot}}(n_1,\ldots,n_r)$ est par définition l'intégrale itérée de $\Omega_{\underline{n}}$ entre $0$ et $1$, c'est-à-dire le nombre $\zeta(n_1,\ldots,n_r)$ par la proposition \ref{prop:MZVs it int}. Dans le cas où $\underline{n}$ est un multi-indice divergent, ce nombre s'interprète via la régularisation de mélange.

L'image du segment unité $[0,1]\in M_{\mathrm{B}}^\vee$ par l'isomorphisme de comparaison de $M$ s'appelle l'\emph{associateur de Drinfel'd}. C'est une série formelle en deux variables dont les coefficients sont toutes les valeurs zêta multiples, qui est un des points de départ de la théorie (pro-unipotente) de Grothendieck-Teichmüller \cite{drinfeldquasihopf}.

\section{Applications des valeurs zêta multiples motiviques}\label{sec:applis pi1mot}

Nous expliquons maintenant comment les idées motiviques permettent de prouver le théorème \ref{thm:goncharov terasoma} de Goncharov et Terasoma et le théorème \ref{thm:brown} de Brown.

\subsection{Des nombres algébriques aux périodes}

Les nombres algébriques sont des périodes particulières au sens de la définition \ref{defi:periodeKZ}, par exemple on peut écrire $\sqrt{2}=\int_{x^2\leq 2}\frac{1}{2} dx$. D'un point de vue plus cohomologique, une racine d'un polynôme irréductible $f(x)\in\mathbb{Q}[x]$ apparaît dans la matrice des périodes du groupe de cohomologie $H^0(X)$ avec $X=\{x\in\mathbb{C}\;|\; f(x)=0\}$. La cohomologie de de Rham est simplement l'espace des fonctions $\mathbb{Q}[x]/(f(x))$ et la cohomologie de Betti a pour base les points de $X$. Une \hbox{matrice} des périodes est donc la matrice de Vandermonde associée aux \hbox{racines} de~$f$. En~résumé, les nombres algébriques sont les périodes des variétés algébriques sur~$\mathbb{Q}$ qui sont de dimension $0$. 

La théorie de Galois étudie les \emph{symétries} des équations polynomiales en une variable. Au corps de nombres $K=\mathbb{Q}[x]/(f(x))$ est associé son \emph{groupe de Galois} $G$ qui est par définition le groupe des automorphismes du corps~$K$. C'est un groupe fini qui se plonge dans le groupe des permutations de l'ensemble des racines complexes de~$f$. Un élément de $G$ est uniquement déterminé par son action sur le générateur $x$ et le cardinal de $G$ est au plus le degré de l'extension $[K:\mathbb{Q}]=\deg(f)$. On dit que l'extension est \emph{galoisienne} si on a l'égalité $|G|=[K:\mathbb{Q}]$. Dans ce cas, la \emph{correspondance de Galois} établit une bijection entre les sous-corps de $K$ et les sous-groupes de $G$. 

Une partie de cette théorie se généralise (conjecturalement) aux périodes de la manière suivante \cite{andregalois}. Pour un motif quelconque~$M$ on peut définir un groupe $G_M$, dit \emph{groupe de Galois motivique}, dont les éléments sont les automorphismes linéaires de l'espace vectoriel $M_{\mathrm{dR}}$ qui \og respectent toutes les constructions algébriques entre motifs\fg\footnote{On a besoin du \emph{formalisme tannakien} pour donner un vrai sens mathématique à l'expression entre guillemets.}. On montre que c'est un groupe algébrique, c'est-à-dire un groupe de matrices carrées défini par des équations algébriques. Il~agit sur les périodes motiviques de $M$ par la formule
$$g\cdot (M,\varphi,\omega)=(M,\varphi,g\cdot \omega),$$
dit autrement il agit \og par multiplication à droite de la matrice des périodes\fg. Si on croit à la conjecture des périodes de Grothendieck (conjecture~\ref{conj:periodes general}), alors cette action descend à une action sur les \hbox{périodes} de $M$. Sans cette conjecture, on n'a accès qu'à une théorie de Galois des périodes \emph{motiviques}. Notons qu'une reformulation de la conjecture des périodes est que la dimension de $G_M$ est égale au degré de transcendance de l'algèbre engendrée par les périodes de $M$. 

Toute l'intuition liée aux nombres algébriques ne se transfère pas au cas des périodes, mais notons tout de même deux principes généraux que nous revisiterons par la suite:

\begin{itemize}
\item \emph{Principe de finitude}. Soit $K$ un sous-corps de $\mathbb{C}$ engendré par un nombre fini de nombres algébriques. Alors $K$ est de dimension finie sur $\mathbb{Q}$.
\item \emph{Principe de reconnaissance}. Supposons que l'extension $K/\mathbb{Q}$ est galoisienne et soit $G$ son groupe de Galois. Si $x\in K$ est un point fixe de l'action de $G$, alors $x\in\mathbb{Q}$.
\end{itemize}
Le premier principe est essentiellement tautologique puisqu'un nom\-bre est algébrique exactement quand ses puissances engendrent un espace vectoriel de dimension finie sur $\mathbb{Q}$. Une application triviale mais parlante du second principe est la suivante: considérons le corps $\mathbb{Q}(\sqrt{2})$ constitué des nombres $a+b\sqrt{2}$ avec $a,b\in\mathbb{Q}$. C'est une extension galoisienne de $\mathbb{Q}$ dont le groupe de Galois est $\mathbb{Z}/2\mathbb{Z}$, engendré par la \og conjugaison\fg $\tau:a+b\sqrt{2}\mto a-b\sqrt{2}$. Comme~$\tau$ est un morphisme de corps, il respecte somme et produit et agit trivialement sur le nombre $x_n=(1+\sqrt{2})^n+(1-\sqrt{2})^n$. On en déduit, sans faire aucun calcul, que $x_n$ est un nombre rationnel.

\subsection{Le principe de finitude et la borne de Goncharov-Terasoma}

L'application du principe de finitude au cas des valeurs zêta multiples permet de prouver le théorème \ref{thm:goncharov terasoma} de Goncharov et Terasoma. Soit $M$ le motif défini au paragraphe \ref{subsec:groupe fondamental motivique} dont les périodes contiennent les valeurs zêta multiples. Il fait partie d'une famille de motifs qui s'appellent \emph{motifs de Tate mixtes sur~$\mathbb{Z}$} et qui sont très particuliers pour deux raisons:
\begin{enumerate}
\item Ils peuvent tous s'obtenir (en un sens précis) à partir du motif de Lefschetz $\mathbb{Q}(-1)$. Cela exclut les motifs des courbes elliptiques, par exemple.
\item Ils sont \emph{à bonne réduction} en chaque nombre premier $p$, c'est-à-dire que la réduction modulo $p$ ne change pas la géométrie sous-jacente au motif. Cela exclut les groupes fondamentaux motiviques de $\mathbb{P}^1\setminus\{0,1,\infty\}$ à points-base finis ; par exemple, le point-base $a=2$ se comporte mal vis-à-vis de la réduction modulo $p=2$.
\end{enumerate}

Ces contraintes fortes impliquent une borne sur la taille de cette famille. Concrètement, les périodes motiviques de ces motifs vivent dans une sous-algèbre $\mathcal{P}^{\mathrm{mot}}$ de l'algèbre $\widetilde{\mathcal{P}}^{\mathrm{mot}}$ de toutes les périodes motiviques, qui est \emph{petite} au sens où elle est de dimension finie en tout poids. On peut quantifier sa taille et même sa structure abstraite grâce à des résultats profonds, notamment le calcul par Borel de la $K$-théorie rationnelle de l'anneau des entiers \cite{borel}. On obtient l'isomorphisme
$$\mathcal{P}^{\mathrm{mot}} \simeq \mathcal{F}$$
promis au paragraphe \ref{subsec:MZV mot first}, qui donne les dimensions de $\mathcal{P}^{\mathrm{mot}}$ en tout poids, et le théorème \ref{thm:goncharov terasoma} après application du morphisme de période\footnote{\emph{Stricto sensu}, ce qu'on note $\mathcal{P}^{\mathrm{mot}}$ est l'algèbre des périodes motiviques \emph{effectives} et \emph{réelles} de la catégorie des motifs de Tate mixtes sur $\mathbb{Z}$.}.

\subsection{Le principe de reconnaissance et le théorème de Brown}

L'action du groupe de Galois motivique $G_M$ sur les valeurs zêta multiples motiviques est étonnamment simple à calculer explicitement par une formule combinatoire due à Goncharov \cite{goncharovgaloissym} et raffinée par Brown \cite{brownMTMZ}. Les valeurs zêta (motiviques) simples jouent un rôle spécial vis-à-vis de cette action au sens où elles ont un espace de conjugués minimal. Plus précisément on a la formule, pour $n\geq 2$:
$$g\cdot \zeta^{\mathrm{mot}}(n) = a_g^{(n)}\zeta^{\mathrm{mot}}(n) + b_g^{(n)}$$
où $a_g^{(n)}$ et $b_g^{(n)}$ sont des fonctions sur $G_M$ et $b_g^{(n)}=0$ pour $n$ pair. (Cette annulation est une manifestation de la solution d'Euler au problème de Bâle qui dit que les valeurs zêta paires sont des multiples rationnels de puissances de $\pi$.) Une remarque importante est que cette formule \emph{caractérise} les valeurs zêta (motiviques) simples, ce qui est une variante du principe de reconnaissance:

\begin{prop}\label{prop:reconnaissance zeta simples}
Soit $\xi\in\mathcal{P}^{\mathrm{mot}}$ une période motivique de poids $n\geq 2$ telle que $G\cdot \xi\subset \mathbb{Q}\xi\oplus\mathbb{Q}$. Alors $\xi$ est un multiple rationnel de $\zeta^{\mathrm{mot}}(n)$.
\end{prop}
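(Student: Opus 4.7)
Le plan est de traduire l'hypothèse sur l'orbite de $\xi$ sous $G$ en une condition sur la coaction $\Delta:\mathcal{P}^{\mathrm{mot}}\to\mathcal{A}\otimes \mathcal{P}^{\mathrm{mot}}$, puis de conclure à partir de la description explicite de la coaction sous l'isomorphisme $\mathcal{P}^{\mathrm{mot}}\simeq \mathcal{F}=\mathcal{A}[f_2]$ évoquée au paragraphe~\ref{subsec:MZV mot first}. Puisque la donnée d'une action d'un groupe algébrique équivaut à celle d'une coaction de son algèbre de fonctions, l'hypothèse que le sous-espace $V:=\mathbb{Q}\oplus\mathbb{Q}\xi\subset\mathcal{P}^{\mathrm{mot}}$ est stable sous $G$ se reformule en $\Delta\xi\in\mathcal{A}\otimes V$, c'est-à-dire
$$\Delta'\xi\;\in\;\mathcal{A}_{>0}\otimes V,$$
où $\Delta'\xi:=\Delta\xi-1\otimes\xi$ désigne la coaction réduite.

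La deuxième étape utilise la graduation par le poids. La coaction préservant le poids total, chaque terme de $\Delta'\xi$ a ses facteurs de poids sommant à $n$ ; or $\mathcal{A}_{>0}$ est concentrée en poids $\geq 3$ (ses générateurs $f_3,f_5,\ldots$ étant tous de poids au moins trois), donc la composante éventuelle dans $\mathcal{A}_{>0}\otimes \mathbb{Q}\xi$ exigerait des éléments de poids nul dans $\mathcal{A}_{>0}$, ce qui est impossible. Par conséquent $\Delta\xi=1\otimes\xi+B\otimes 1$ pour un certain $B\in\mathcal{A}_n$.

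La troisième étape est une vérification combinatoire dans $\mathcal{F}$. La coaction sur les monômes de la base est donnée par la déconcaténation sur les lettres impaires ($f_2$ étant invariant) :
$$\Delta'\bigl(f_{i_1}\cdots f_{i_r}\,f_2^k\bigr)=\sum_{j=1}^{r}f_{i_1}\cdots f_{i_j}\otimes f_{i_{j+1}}\cdots f_{i_r}\,f_2^k.$$
Décomposons $\xi=\sum c_{w,k}\,w\,f_2^k$ dans la base de $\mathcal{F}_n$. La condition $\Delta'\xi\in\mathcal{A}_n\otimes\mathbb{Q}$ demande que, pour chaque facteur tensoriel droit non trivial $w'f_2^k\neq 1$, la somme $\sum_{w=w_1w',\,w_1\neq\emptyset}c_{w,k}\,w_1$ s'annule dans $\mathcal{A}$ ; l'indépendance linéaire des mots $w_1$ force alors $c_{w,k}=0$ pour tout monôme admettant un tel facteur droit, c'est-à-dire pour tout monôme autre que $f_n$ (si $n\geq 3$ est impair) ou $f_2^{n/2}$ (si $n$ est pair). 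Dans les deux cas il ne reste qu'un unique monôme, qui correspond à un multiple rationnel de $\zeta^{\mathrm{mot}}(n)$.

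Le point délicat de cet argument est la toute première étape, où l'on utilise l'équivalence tannakienne entre représentations d'un groupe algébrique et comodules sur son anneau de fonctions : c'est elle qui permet de \og tensoriser\fg~la condition élémentaire sur l'orbite de $\xi$. Les étapes restantes ne sont qu'analyse de poids et combinatoire dans l'algèbre de Hopf explicite~$\mathcal{F}$.
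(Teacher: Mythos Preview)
The paper is a survey and states this proposition without proof, so there is no argument to compare yours against directly. Your proof is correct and is the natural one: translate $G$-stability of $\mathbb{Q}\oplus\mathbb{Q}\xi$ into the coaction condition $\Delta'\xi\in\mathcal{A}_{>0}\otimes(\mathbb{Q}\oplus\mathbb{Q}\xi)$, kill the $\mathbb{Q}\xi$ component by the weight bound $\mathcal{A}_{>0}\subset\bigoplus_{m\geq 3}\mathcal{A}_m$, and read off from the deconcatenation coaction on $\mathcal{F}$ that only a single monomial survives in weight~$n$.

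Two small points you leave implicit and might make explicit. First, $\mathcal{A}$ is the function algebra of the pro-unipotent radical $U\subset G$, not of $G$ itself; your first step tacitly uses that $V=\mathbb{Q}\oplus\mathbb{Q}\xi$ is automatically $\mathbb{G}_m$-stable (it is graded) to reduce $G$-stability to $U$-stability before invoking the $\mathcal{A}$-coaction. Second, in the even-weight case the surviving monomial is $f_2^{n/2}$, which corresponds to $(\zeta^{\mathrm{mot}}(2))^{n/2}$ rather than to $\zeta^{\mathrm{mot}}(n)$ directly; one then needs the motivic Euler relation $\zeta^{\mathrm{mot}}(n)\in\mathbb{Q}^\times\!\cdot(\zeta^{\mathrm{mot}}(2))^{n/2}$ to conclude. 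Both points are standard and do not affect the validity of your argument.
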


Une découverte de Brown est que cette remarque permet de montrer l'existence de relations entre valeurs zêta multiples par voie galoisienne \cite{browndecomposition}. Par exemple, partant de la formule
$$g\cdot \zeta^{\mathrm{mot}}(2,3) = a_g^{(5)}\,\zeta^{\mathrm{mot}}(2,3) + 3\,a_g^{(2)}b_g^{(3)}\, \zeta^{\mathrm{mot}}(2) + c_g^{(5)}$$
pour la théorie de Galois de $\zeta^{\mathrm{mot}}(2,3)$, on déduit que la combinaison linéaire
$$\zeta^{\mathrm{mot}}(2,3)-3\,\zeta^{\mathrm{mot}}(2)\zeta^{\mathrm{mot}}(3)$$
satisfait aux hypothèses de la proposition \ref{prop:reconnaissance zeta simples} et est donc un multiple rationnel de $\zeta^{\mathrm{mot}}(5)$:
$$\zeta^{\mathrm{mot}}(2,3)-3\,\zeta^{\mathrm{mot}}(2)\zeta^{\mathrm{mot}}(3) = \lambda\, \zeta^{\mathrm{mot}}(5) \quad (\lambda\in\mathbb{Q}).$$
Le coefficient de proportionnalité exact ne peut pas être déterminé par cette méthode mais peut être calculé empiriquement en appliquant le morphisme de période:
$$\lambda = \frac{\zeta(2,3)-3\,\zeta(2)\zeta(3)}{\zeta(5)},$$
qui peut se calculer avec une précision arbitraire\footnote{On trouve alors $\lambda\simeq -11/2$. Dans ce cas simple, on peut évidemment montrer la relation en question grâce aux relations de double mélange.}. Cette stratégie est au centre de la preuve des théorèmes \ref{thm:brown} et \ref{thm:brown bis} de Brown.

\section{Les valeurs zêta multiples comme amplitudes en~physique}

\subsection{Amplitudes et intégrales de Feynman}

À la fin des années 1940, Feynman révolutionne la physique en introduisant une méthode pour calculer l'amplitude de probabilité des interactions entre particules élémentaires, qui a très rapidement fait ses preuves et est encore utilisée aujourd'hui. Ces amplitudes se calculent comme des séries entières dont le paramètre est appelé \emph{constante de couplage}, et dont les coefficients sont des intégrales (de~Feynman) indexées par des graphes (de Feynman). Chaque graphe témoi\-gne d'un scénario de collision, où les arêtes représentent des particules et les sommets des interactions. La précision du résul\-tat augmente au fur et à mesure que l'on calcule des intégrales de \hbox{Feynman} associées à des graphes de plus en plus complexes. (La~complexité est mesurée par le nombre de cycles indépendants dans le graphe.) On peut alors vérifier que ces calculs décrivent fidèlement la réalité en comparant leurs prédictions aux mesures issues des accélérateurs de particules, par exemple le LHC (\emph{Large Hadron Collider} ou Grand collisionneur de hadrons, gigantesque accélérateur du CERN situé sous la frontière franco-suisse). Les intégrales de Feynman sont notoirement difficiles à calculer, même dans le cas d'interactions simples, mais dans les cas abordables, décrivent la réalité de manière suprenamment bonne.

Un de ces cas est la théorie $\phi^4$ non massive, qui correspond à des graphes (au sens usuel) où chaque sommet est adjacent à au plus $4$ arêtes. Pour un tel graphe $G$, supposé connexe, soient $e_1,\ldots,e_{N_G}$ ses arêtes et définissons un polynôme
$$\Psi_G=\sum_{T}\prod_{e\notin T}x_e,$$
où la somme est prise sur les arbres couvrants $T$ de $G$ (ensembles d'arêtes maximaux sans cycles), et où $x_e$ est une variable formelle (appelée \emph{paramètre de Schwinger}) associée à chaque arête $e$. Ce polynôme est d'abord apparu dans les travaux de Kirchhoff sur les circuits électriques. On définit alors une intégrale
\begin{equation}\label{eq:integrale graphe}
I_G=\int_\sigma \frac{\Omega_G}{\Psi_G^2},
\end{equation}
où $\sigma$ est le domaine $\{x_e\geq 0 \;(\forall e)\}$ de l'espace projectif $\mathbb{P}^{N_G-1}$ et
$$\Omega_G = \sum_{i=1}^{N_G}(-1)^{i}x_i \, dx_1\wedge\cdots \wedge \widehat{dx_i}\wedge \cdots\wedge dx_{N_G}$$
est la forme volume canonique. Il est facile de caractériser les graphes, appelés log-divergents primitifs, pour lesquels l'intégrale ci-dessus converge. Dans ce cas sa valeur est une période au sens de la définition \ref{defi:periodeKZ} et est une contribution à une amplitude de la théorie $\phi^4$. On ne sait essentiellement pas calculer les intégrales \eqref{eq:integrale graphe} pour des graphes avec plus de 12 cycles indépendants.

L'étude géométrique des amplitudes \eqref{eq:integrale graphe} a été initiée par Bloch-Esnault-Kreimer \cite{blochesnaultkreimer}, qui associent à $G$ un groupe de cohomologie relative dont la matrice des périodes contient $I_G$. La compréhension de ce groupe de cohomologie passe par l'étude de l'\emph{hypersurface de graphe} $X_G=\{\Psi_G=0\}$ et relève notamment de la théorie des singularités.

\subsection{Apparition des valeurs zêta multiples}

Les intégrales \eqref{eq:integrale graphe} ont été calculées de manière systématique dans les années 1990, notamment par Broadhurst et ses collaborateurs, voir par exemple Broadhurst-Kreimer \cite{broadhurstkreimerknots}. Pour certains graphes, les amplitudes s'expriment en termes de valeurs zêta simples:

\begin{figure}[htb]
\begin{center}
\includegraphics[scale=.5]{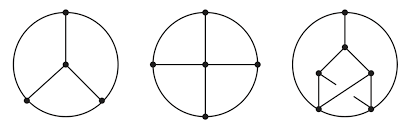}
\end{center}
\hspace*{.2cm}$6\,\zeta(3)$\hspace{1.4cm}$20\,\zeta(5)$\hspace{1.33cm}$36\,\zeta(3)^2$\vspace*{-5pt}
\end{figure}
On trouve aussi des valeurs zêta multiples qui (conjecturalement) ne sont pas des polynômes en les valeurs zêta simples:\enlargethispage{\baselineskip}%

\begin{figure}[htb]
\begin{center}
\includegraphics[scale=.35]{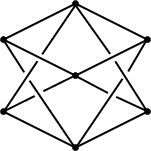}
\end{center}
\centerline{$\frac{27}{5}\zeta(5,3)+\frac{45}{4}\zeta(5)\zeta(3)-\frac{261}{20}\zeta(8)$}
\end{figure}
Des variantes \emph{cyclotomiques} des valeurs zêta multiples apparaissent aussi ; ce sont les évaluations aux racines de l'unité des polylogarithmes multiples~\eqref{eq:polylog multiple}. On ne sait pas quelles classes de périodes apparaissent comme des amplitudes de la forme \eqref{eq:integrale graphe} mais il est aujourd'hui communément admis que contrairement à ce que montrent les calculs pour des petits graphes, la géométrie sous-jacente à ces amplitudes peut être \og aussi compliquée que possible\fg \cite{belkalebrosnan, brownschnetzK3}. On s'attend donc à rencontrer des périodes de plus en plus riches à mesure que les techniques de calcul se développeront.

\subsection{Vers une théorie de Galois cosmique}

Il est notable que les amplitudes de la théorie $\phi^4$ qui sont des combinaisons linéaires de valeurs zêta multiples engendrent un petit sous-espace $\mathcal{Z}_{\phi^4}$ de l'algèbre $\mathcal{Z}$. Par exemple, on est amené à penser que ni $\zeta(2)$ ni aucun produit $\zeta(2)\zeta(2n+1)$ n'appartient à ce sous-espace. Il y a donc une tension entre la richesse de ces amplitudes que nous venons d'évoquer et leur \emph{rareté}. Ce phénomène reste encore mystérieux mais pourrait avoir une origine galoisienne. En effet, les calculs de Panzer-Schnetz suggèrent que la version motivique $\mathcal{Z}_{\phi^4}^{\mathrm{mot}}$ est stable par l'action du groupe de Galois motivique des valeurs zêta multiples \cite{panzerschnetz}. Plus ambitieusement, Panzer et Schnetz proposent la conjecture suivante, qui va bien au-delà du cadre des valeurs zêta multiples:

\begin{conj}[Conjecture de coaction]\label{conj:coaction}
L'espace engendré par toutes les amplitudes (motiviques) de la théorie $\phi^4$ est stable par l'action du groupe de Galois motivique.
\end{conj}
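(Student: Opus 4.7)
Le plan consisterait à combiner le formalisme motivique de Bloch-Esnault-Kreimer \cite{blochesnaultkreimer} avec une description combinatoire explicite de la coaction motivique. Pour chaque graphe $G$ primitif log-divergent de la théorie $\phi^4$, on dispose d'un motif $M_G$ dont les périodes contiennent $I_G$: c'est la cohomologie relative de $\mathbb{P}^{N_G-1}$ par rapport à l'hypersurface de graphe $X_G$ et au bord du domaine d'intégration~$\sigma$. On relève alors $I_G$ en une période motivique $I_G^{\mathrm{mot}} = (M_G, [\sigma], [\Omega_G/\Psi_G^2])\in \widetilde{\mathcal{P}}^{\mathrm{mot}}$, et $\mathcal{Z}_{\phi^4}^{\mathrm{mot}}$ est par définition le sous-espace engendré par ces éléments (étendu aux graphes avec pattes externes pour pouvoir y inclure les amplitudes partielles).

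La deuxième étape, centrale, est de calculer l'action du groupe de Galois motivique sur $I_G^{\mathrm{mot}}$, ou de manière équivalente la coaction associée. Par les principes tannakiens généraux, la coaction d'une période d'intégrale se lit sur les strates du bord du domaine et sur les résidus itérés de la forme différentielle. Dans le cadre des amplitudes, les strates pertinentes de $\sigma$ dans $\mathbb{P}^{N_G-1}$ correspondent combinatoirement aux dégénérescences d'arêtes $x_e\to 0$ (contraction de $e$) ou $x_e\to \infty$ (suppression de $e$). On espère ainsi obtenir une formule du type
$$\Delta(I_G^{\mathrm{mot}}) = \sum_\gamma I_\gamma^{\mathrm{mot}}\otimes I_{G/\gamma}^{\mathrm{mot}},$$
la somme portant sur les sous-graphes $\gamma\subseteq G$ jouant un rôle distingué (en substance les sous-graphes sous-divergents), $G/\gamma$ désignant le graphe quotient. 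Cette formule est une incarnation motivique de la structure d'algèbre de Hopf de Connes-Kreimer sur les graphes de Feynman, et est compatible avec les résultats expérimentaux de Panzer-Schnetz \cite{panzerschnetz}.

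La troisième étape consiste à établir la stabilité combinatoire propre à $\phi^4$: il s'agit de vérifier que pour tout $\gamma$ et tout $G/\gamma$ apparaissant dans la formule ci-dessus, ces graphes restent de type $\phi^4$ (éventuellement avec pattes externes), et que leurs amplitudes motiviques appartiennent bien à $\mathcal{Z}_{\phi^4}^{\mathrm{mot}}$. La condition $\phi^4$ (degré des sommets $\leq 4$) se comporte favorablement vis-à-vis de la contraction d'une arête, ce qui rend cette préservation plausible. Les sous-graphes UV-divergents requièrent cependant une régularisation motivique, dans l'esprit de la régularisation de mélange du paragraphe \ref{subsec:shuffle}, mais transposée dans le cadre géométrique des hypersurfaces de graphes.

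Le principal obstacle est double. D'une part, les motifs $M_G$ ne sont en général pas de Tate mixtes \cite{belkalebrosnan, brownschnetzK3}, de sorte que la coaction motivique n'est plus régie par la formule explicite de Goncharov utilisée pour les polylogarithmes, et doit être contrôlée abstraitement via la théorie tannakienne. D'autre part, la formule combinatoire attendue pour $\Delta(I_G^{\mathrm{mot}})$ n'est pour l'instant démontrée que graphe par graphe dans les calculs explicites ; l'établir dans toute sa généralité demanderait probablement une analyse motivique fine d'une compactification à croisements normaux de la paire $(\mathbb{P}^{N_G-1}, X_G\cup\sigma)$, analogue aux compactifications polytopiques utilisées par Brown pour les valeurs zêta multiples motiviques. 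C'est ce dernier pas, transformer une heuristique combinatoire en un énoncé motivique uniforme sur tous les graphes de $\phi^4$, qui constitue le coeur de la difficulté.
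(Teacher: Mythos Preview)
L'énoncé en question est une \emph{conjecture}, présentée comme telle dans l'article et laissée ouverte : l'article ne propose aucune démonstration et se contente d'en expliquer la portée et de la motiver par les calculs de Panzer--Schnetz. Il n'y a donc pas de preuve dans l'article à laquelle comparer ta proposition.

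Ta proposition n'est d'ailleurs pas une preuve mais une esquisse de stratégie, ce que tu reconnais toi-même en identifiant explicitement les obstacles dans le dernier paragraphe. Cette esquisse est raisonnable et bien informée : le motif de Bloch--Esnault--Kreimer, une coaction de type Connes--Kreimer sur les graphes, et la nécessité d'une compactification à croisements normaux sont bien les ingrédients qu'on attendrait d'une attaque sur ce problème. Deux remarques cependant. Premièrement, l'affirmation que la condition $\phi^4$ se comporte favorablement vis-à-vis de la contraction d'une arête est optimiste : contracter une arête reliant deux sommets de degré $4$ produit en général un sommet de degré supérieur à $4$, de sorte que la stabilité de la classe des graphes $\phi^4$ par passage au quotient $G/\gamma$ n'a rien d'évident et constitue un point délicat de ta troisième étape. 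Deuxièmement, les obstacles que tu identifies toi-même (motifs non nécessairement de Tate mixtes, absence de formule générale pour la coaction) sont précisément ce qui fait de l'énoncé une conjecture ouverte ; les surmonter demanderait des idées nouvelles qui ne figurent pas dans ton plan.
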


Cette conjecture affirme que les contraintes sur les périodes qui apparaissent en théorie $\phi^4$ se propagent par l'action du groupe de Galois motivique\footnote{Reste encore à prouver que des contraintes existent, ce qui peut se faire par un nombre fini de calculs d'après le \emph{principe des petits graphes} de Brown \cite{brownfeynmancosmic}, qui affirme que des périodes de poids \og petit\fg doivent apparaître comme des amplitudes de graphes \og petits\fg.}. Elle suggère donc des symétries cachées dans les calculs d'amplitudes en physique des particules et donne du poids à l'idée, popularisée par Cartier, d'une \og théorie de Galois cosmique\fg de certaines constantes physiques \cite{cartiermad}.

\subsubsection*{Pour aller plus loin}

L'étude des valeurs zêta multiples est à l'intersection de nombreux domaines des mathématiques et de la physique. Nous ne pouvions donc aborder que certains des aspects de la théorie et laisser de côté des sujets aussi passionnants que les liens avec les associateurs et la théorie de Grothendieck-Teichmüller, ou encore les développements récents liés aux phénomène modulaires dans les valeurs zêta multiples. Le lecteur curieux pourra trouver des pistes de réflexion et une bibliographie plus étoffée dans le livre de référence \cite{burgosfresan}.

\bibliographystyle{amsalpha}
\bibliography{Expose-X-UPS-final}
		
\end{document}